\title[]{Rabinowitz Fukaya categories
as Cluster categories
}
\author[Y.~Lekili]{Yank\i\ Lekili}
\address{
Department of Mathematics,
Imperial College London,
South Kensington,
London,
SW7 2AZ}
\email{y.lekili@imperial.ac.uk}
\author[K.~Ueda]{Kazushi Ueda}
\address{
Graduate School of Mathematical Sciences,
The University of Tokyo,
3-8-1 Komaba,
Meguro-ku,
Tokyo,
153-8914,
Japan}
\email{kazushi@ms.u-tokyo.ac.jp}
\date{}
\begin{document}

\begin{abstract}
We discuss
homological mirror symmetry
for Rabinowitz Fukaya categories
of Milnor fibers
of double suspensions
of invertible polynomials,
and prove it for Brieskorn--Pham polynomials
which are not of Calabi--Yau type. 
This allows a calculation of the Rabinowitz Floer homology
of the Milnor fiber
as the Hochschild homology of the dg category
of equivariant matrix factorizations. 
\end{abstract}

\maketitle

\section{Introduction}

\subsection{}
For a pair $(k,m)$ of integers satisfying
$k \ge 2$ and $m-k \ge 2$,
let
\begin{align}
\Uv \coloneqq \lc (x,y,z,w) \in \bC^4 \relmid x^k + y^{m-k} + z^2 + w^2 = 1 \rc
\end{align}
be the Milnor fiber
of the double suspension
of the Brieskorn--Pham polynomial
$x^k+y^{m-k}$.
The cases $(k,m) = (2,r+3)$, $(3,6)$, $(3,7)$, and $(3,8)$
give simple singularities of types
$A_{r}$, $D_4$, $E_6$, and $E_8$ respectively,
for whom
homological mirror symmetry
conjectured in \cite{MR4442683}
is proved in \cite{MR4371540}.

\subsection{}

Fix an algebraically closed field $\bfk$ of characteristic zero
as coefficients for Fukaya categories.
The
\emph{stable Fukaya category}
of the Liouville manifold $\Uv$
is defined
as the quotient
\begin{align}
  \sfuk{\Uv} \coloneqq \wfuk{\Uv} / \fuk{\Uv}
\end{align}
of the wrapped Fukaya category $\wfuk{\Uv}$
by its full subcategory $\fuk{\Uv}$
consisting of compact Lagrangian submanifolds.

\subsection{}

Set
\begin{align}
R \coloneqq \bfk[x,y]/(x^k+y^{m-k}),
\end{align}
and
let
$G$ be the diagonal subgroup of $\SL_2$
isomorphic to
$
\bmu_m = \Spec \bfk[\xi]/(\xi^m-1)
$.
The stable category
$
\CMbar_G(R)
$
of
the category
$
\CM_G(R)
$
of $G$-equivariant maximal Cohen--Macaulay $R$-modules
is the homotopy category
of the dg category
$
\emf{\bA^2}{G}{x^k+y^{m-k}}
$
of $G$-equivariant matrix factorizations
of $x^k+y^{m-k}$
on $\bA^2 = \Spec \bfk[x,y]$
\cite{MR570778},
which is quasi-equivalent
to the \emph{stable derived category}
$
\scoh X \coloneqq \coh X / \perf X
$
defined as the dg quotient
of the bounded derived category of coherent sheaves
on the quotient stack
$
X \coloneqq \ld \Spec R \middle/ G \rd
$
by the full subcategory consisting of perfect complexes
\cite{Buchweitz_MCM,MR2101296}.

\subsection{}

As shown in \cite{MR3534971},
the category
$
\CM_G(R)
$
gives an additive categorification
of the cluster algebra $\bfk[\Gr_{k,m}]$.
Moreover,
the endomorphism ring of a cluster-tilting module
is described by a dimer model on a disk \cite{MR3534972},
which is originally introduced
in \cite{0609764}
to describe parametrizations
of cells in totally nonnegative Grassmannians,
and used in \cite{MR2205721}
to give a structure of a cluster algebra
on the homogeneous coordinate ring of $\Gr(k,m)$.

\subsection{}

In this paper,
we give two proofs of an equivalence
\begin{align} \label{eq:motivation}
\emf{\bA^2}{G}{x^k+y^{m-k}} \simeq \sfuk{\Uv}
\end{align}
and its generalizations.
From \pref{eq:motivation} and on,
dg categories of matrix factorizations
and Fukaya categories are completed
with respect to cones and direct summands,
so that they are idempotent-complete
stable $\infty$-categories over $\bfk$.

\subsubsection{}
One proof is based on homological mirror symmetry
for (exact symplectic Lefschetz fibrations associated with)
invertible polynomials,
which is known for Brieskorn--Pham singularities
\cite{MR2803848}.

\subsubsection{}
The other proof is based on \pref{cj:milnor},
which is homological mirror symmetry
for Milnor fibers of invertible polynomials
\cite{MR4442683}.
We prove \pref{cj:milnor}
for Brieskorn--Pham singularities
in \pref{th:hms for U}.

\subsection{}
Let
$
\RFH_*(\Uv)
$
and
$
\rfuk{\Uv}
$
be the Rabinowitz Floer homology
and
the Rabinowitz Fukaya category
of $\Uv$
respectively.
Koszul duality holds for $\Uv$
by \cite[Theorem 6.11]{MR4442683},
so that
one has
\begin{align} \label{eq:Rabinowitz Fukaya category}
 \sfuk{\Uv} \simeq \rfuk{\Uv}
\end{align}
by \cite[Corollary 1.4]{2212.14863}
(cf.~also \cite{2309.17062}),
and
\begin{align} \label{eq:Rabinowitz Floer homology}
\RFH_*(\Uv)
\simeq
\HH_*(\rfuk{\Uv})
\end{align}
by \cite[Corollary 1.7]{2212.14863}.
Hence one can compute the Rabinowitz Floer homology
as the Hochschild homology
of the dg category of equivariant matrix factorizations.

\subsection{}
This paper is organized as follows:

\subsubsection{}
In \pref{sc:suspension},
we prove \pref{th:double suspension},
which shows that a generalization of \pref{eq:motivation}
to double suspensions of invertible polynomials
follows
from \pref{cj:HMS_sing}.
\pref{cj:HMS_sing}
is homological mirror symmetry
for invertible polynomials.
\pref{th:double suspension} implies \pref{eq:motivation}
since \pref{cj:HMS_sing} is known
for Brieskorn--Pham polynomials.

\subsubsection{}
In \pref{sc:invertible},
we show that
\pref{cj:milnor}
(which is homological mirror symmetry
for Milnor fibers of invertible polynomials)
implies \pref{eq:stable hms}
(which is a `stable' version
of homological mirror symmetry
for Milnor fibers of invertible polynomials).
Then we prove \pref{th:double suspension 2}
(which shows that
a generalization of \pref{eq:motivation}
to double suspensions of invertible polynomials
follows
from \pref{cj:milnor})
using \pref{eq:stable hms}
and the Kn\"{o}rrer periodicity.

\subsubsection{Remark}
\pref{th:double suspension}
and \pref{th:double suspension 2}
have slightly different (but closely related) hypotheses
(homological mirror symmetry for $\bfw$ for the former
and
that for the Milnor fiber of $\bfW = \bfw + z^2 + w^2$
for the latter)
and the same conclusion.

\subsubsection{}
In \pref{sc:hms},
we prove \pref{cj:milnor}
for Brieskorn--Pham polynomials.
The proof is based on
\begin{itemize}
\item
a deformation-theoretic argument
going back to Seidel and Sheridan,
and
\item
the Koszul duality between the Fukaya category
and the wrapped Fukaya category.
\end{itemize}

\subsubsection{}
The `stable' homological mirror symmetry \pref{eq:stable hms}
gives an algorithm
to compute the Rabinowitz Floer homology explicitly.
We give sample calculations
in \pref{sc:RFH}.

\subsubsection*{Acknowledgments}
We thank the anonymous referee
for valuable comments and suggestions.

\section{Double suspensions of invertible polynomials} \label{sc:suspension}

A weighted homogeneous polynomial
$
 \bfw \in \bC[x_1,\ldots,x_n]
$
with an isolated critical point at the origin
is \emph{invertible}
if there is an integer matrix
$
 A = (a_{ij})_{i, j=1}^n
$
with non-zero determinant
such that
\begin{align}
 \bfw = \sum_{i=1}^n \prod_{j=1}^n x_j^{a_{ij}}.
\end{align}
The \emph{transpose} of $\bfw$ is defined in \cite{MR1214325} as
\begin{align}
 \bfwv =  \sum_{i=1}^n \prod_{j=1}^n x_j^{a_{ji}},
\end{align}
whose exponent matrix $\Av$
is the transpose matrix of $A$.
The group
\begin{align}
\Gamma \coloneqq
\lc (t_1, \ldots, t_n) \in (\Gm)^{n} \relmid
t_1^{a_{11}} \cdots t_n^{a_{1n}}
= \cdots
= t_1^{a_{n1}} \cdots t_n^{a_{nn}}
\rc
\end{align}
acts naturally on $\bA^{n}$.
The group
$
\Gammahat \coloneqq \Hom(\Gamma, \Gm)
$
of characters of $\Gamma$
is generated by
$
\chi_i \colon (t_j)_{j=1}^n \mapsto t_i
$
for
$
i = 1, \ldots, n
$
with relations
$
\chi \coloneqq
\sum_{i=1}^n a_{1i} \chi_i
= \cdots
= \sum_{i=1}^n a_{ni} \chi_i
$.
Here,
the group structure on $\Gammahat$ is written additively.

Let
$
 \emf{\bA^n}{\Gamma}{\bfw}
$
be
the idempotent completion of
the dg category of $\Gamma_{\bfw}$-equivariant matrix factorizations
of $\bfw$,
and
$\fuk{\bfwv}$ be the Fukaya--Seidel category of
(a Morsification of) $\bfwv$.

\begin{conjecture} \label{cj:HMS_sing}
For any invertible polynomial $\bfw$,
one has an equivalence
\begin{align}
 \emf{\bA^n}{\Gamma}{\bfw}
  \simeq \fuk{\bfwv}
\end{align}
of $\infty$-categories.
\end{conjecture}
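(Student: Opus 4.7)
My approach would follow the Seidel--Sheridan paradigm: identify explicit generators on both sides, compute their $A_\infty$-endomorphism algebras, and match them via a formality or versality argument.

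On the Fukaya--Seidel side, the plan is to fix a Morsification $\bfwv_\varepsilon$ of $\bfwv$, choose a distinguished basis of vanishing paths from a regular value to the critical values, and argue that the resulting Lefschetz thimbles $T_1, \ldots, T_\mu$ generate $\fuk{\bfwv}$. The directed $A_\infty$-structure on $\bigoplus_{i,j} \hom(T_i, T_j)$ is then determined by counts of pseudoholomorphic polygons with boundary on the vanishing cycles $\partial T_i$. For invertible $\bfwv$ the vanishing cycles admit a symmetric presentation compatible with the natural torus action on $\bA^n$, and one indexes them by a distinguished subset of characters of a finite symmetry group; this symmetry should cut the possible polygon contributions down to a manageable set and make intersection numbers explicit.

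On the matrix factorization side, the character lattice $\Gammahat$ provides a natural grading on $\emf{\bA^n}{\Gamma}{\bfw}$. The strategy is to exhibit a finite collection of $\Gamma$-equivariant matrix factorizations $\lc M_\chi \rc$ indexed by characters in a distinguished subset of $\Gammahat$ -- for Brieskorn--Pham $\bfw = \sum x_i^{a_i}$ one takes tensor products of the two-term matrix factorizations $(x_i^{k_i}, x_i^{a_i - k_i})$ with $1 \le k_i \le a_i - 1$, twisted by characters -- and compute their morphism spaces as $\Gammahat$-graded $\bfk$-vector spaces via the standard $\Ext$-calculus for matrix factorizations. Generation of $\emf{\bA^n}{\Gamma}{\bfw}$ by this collection follows from Orlov-type generation results for stable derived categories. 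The $\Gammahat$-grading severely constrains the higher $A_\infty$-products, and a Kadeishvili-type formality argument combined with matching of cohomological algebras should upgrade a cohomological isomorphism to an $A_\infty$-quasi-equivalence, and hence after idempotent completion to the desired equivalence.

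The main obstacle is uniformity across the class of all invertible polynomials. For Brieskorn--Pham polynomials the diagonal structure makes both sides transparent, and \cite{MR2803848} carries this out by identifying the $\mu = \prod (a_i - 1)$ vanishing thimbles with the Koszul matrix factorizations above. For general chains, loops, and their Thom--Sebastiani combinations, the vanishing cycles are geometrically more intricate and a canonical generating set of matrix factorizations is less obvious. A plausible inductive route is Thom--Sebastiani reduction combined with Kn\"orrer periodicity, reducing to chain and loop polynomials of small arm length; the hard step is controlling the $A_\infty$-products across these reductions, and in particular verifying that suspension intertwines the two sides at the level of higher operations rather than just at cohomology. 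This delicate matching of higher structure is, I believe, why the conjecture remains open outside the special classes where the cyclic symmetry makes the formality constraint automatic.
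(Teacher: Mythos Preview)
The statement you were given is a \emph{conjecture} in the paper, not a theorem; the paper does not prove it and does not claim to. It records the status: proved for $n=2$ in \cite{MR4325377} and for Sebastiani--Thom sums of polynomials of types A and D in \cite{MR2803848,MR3030671}, and otherwise open. There is therefore no ``paper's own proof'' to compare your proposal against.

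That said, your sketch is a fair summary of the strategy behind the known cases, and you yourself flag that the general case remains open. Two remarks. First, your proposal leans on formality/versality to upgrade a cohomological match to an $A_\infty$-equivalence; in the Brieskorn--Pham case \cite{MR2803848} the argument instead proceeds by an explicit inductive identification of both sides with the tensor product of $A_{p_i-1}$ derived categories, so formality is not the actual mechanism there. Second, your proposed Thom--Sebastiani/Kn\"orrer reduction to short chains and loops is plausible in spirit, but the genuine obstruction is not just tracking higher products through suspension: for chain and loop atoms the vanishing cycle configuration does not inherit the clean product structure, and matching the directed $A_\infty$-algebra to an explicit matrix-factorization collection requires case-specific geometric input (this is what \cite{MR4325377} supplies in two variables). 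So your ``hard step'' is correctly located, but it is not merely a bookkeeping issue---it is the substance of the open problem.
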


\pref{cj:HMS_sing} is stated for Brieskorn--Pham singularities in three variables
in \cite{0604361},
for polynomials in three variables associated with a regular system of weights of dual type
in the sense of Saito
in \cite{MR2683215},
and
for invertible polynomials in three variables
in \cite{MR2834726}.
It
is proved
for $n=2$ in \cite{MR4325377}, and
for Sebastiani--Thom sums of polynomials of type A and D
in \cite{MR2803848,MR3030671}.
The conjecture that
$
\emf{\bA^n}{\Gamma}{\bfw}
$
has a full exceptional collection,
which is implied by \pref{cj:HMS_sing},
is stated in
\cite[Conjecture 1.4]{MR4535018},
and proved
in \cite{MR4591879}.




Let $(\dv_1,\ldots,\dv_n,\hv)$ be the sequence of positive integers
such that
$\gcd(\dv_1,\ldots,\dv_n,h) = 1$ and
\begin{align}
\bfwv(t^{\dv_1} x_1,\ldots,t^{\dv_n} x_n)
= t^{\hv} \bfwv(x_1,\ldots,x_n),
\end{align}
which is unique since $\bfwv$ is an invertible polynomial.
We say that $\bfwv$ is of Calabi--Yau type
if
\begin{align}
  \hv = \dv_1 + \cdots + \dv_n.
\end{align}

Let
\begin{align}
\Uv \coloneqq \lc (x_1,\ldots,x_n,z,w) \in \bC^{n+2} \relmid \bfwv + z^2 + w^2 = 1 \rc
\end{align}
be the Milnor fiber
of the double suspension
of $\bfwv$
and
set
\begin{align}
G \coloneqq \lc (t_1,\ldots,t_n) \in \Gamma \relmid t_1 \cdots t_n = 1 \rc.
\end{align}
We say that
$\fuk{\Uv}$ and $\wfuk{\Uv}$ are 
\emph{Koszul dual} to each other
if there exist collections
$(S_i)_{i=1}^\mu$
and
$(L_i)_{i=1}^\mu$
of objects
generating
$\fuk{\Uv}$
and
$\wfuk{\Uv}$
such that
\begin{align}
\dim_\bfk \hom^* \lb L_i, S_j \rb = \delta_{ij},
\qquad 1 \le i,j \le \mu,
\end{align}
where $\delta_{ij}$ is the Kronecker delta,
so that
the augmented endomorphism $A_\infty$-algebras
of
$
\bigoplus_{i=1}^\mu S_i
$
and
$
\bigoplus_{i=1}^\mu L_i
$
are Koszul dual to each other.
By \cite[Theorem 6.11]{MR4442683},
this assumption is satisfied if
$\bfwv$ is a Brieskorn--Pham polynomial
not of Calabi--Yau type.

\begin{theorem} \label{th:double suspension}
\pref{cj:HMS_sing}
and Koszul duality
between $\fuk{\Uv}$ and $\wfuk{\Uv}$
implies an equivalence
\begin{align} \label{eq:double suspension}
\emf{\bA^n}{G}{\bfw} \simeq \sfuk{\Uv}
\end{align}
of $\infty$-categories.
\end{theorem}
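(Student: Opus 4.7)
The plan is to reduce \pref{eq:double suspension} to \pref{cj:HMS_sing} via a double-suspension argument on both sides, invoke the Koszul duality hypothesis to transport the resulting identification from the compact to the wrapped Fukaya category, and then take the stable quotient.

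By \pref{cj:HMS_sing}, the Fukaya--Seidel category $\fuk{\bfwv}$ is equivalent to $\emf{\bA^n}{\Gamma}{\bfw}$, matching a distinguished exceptional collection of Lefschetz thimbles $(\Delta_i)_{i=1}^\mu$ with an exceptional collection of $\Gamma$-equivariant matrix factorizations. Viewing $\bfwv + z^2 + w^2 \colon \bC^{n+2} \to \bC$ as a Lefschetz fibration with Milnor fiber $\Uv$, the stabilized thimbles have boundaries that are Lagrangian spheres $(S_i)_{i=1}^\mu \subset \Uv$; by the Koszul duality hypothesis these are the compact generators of $\fuk{\Uv}$. Seidel's Lefschetz-thimble formalism, passing from the directed endomorphism algebra of thimbles to the bidirectional endomorphism algebra of vanishing cycles, together with the A-side stabilization isomorphism $\fuk{\bfwv + z^2 + w^2} \simeq \fuk{\bfwv}$, then expresses the endomorphism $A_\infty$-algebra of $\bigoplus_i S_i$ in $\fuk{\Uv}$ in terms of the exceptional collection of $\fuk{\bfwv}$. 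On the B-side, equivariant Kn\"orrer periodicity relates $\emf{\bA^n}{\Gamma}{\bfw}$ to matrix factorizations of $\bfw + z^2 + w^2$, with the equivariance group reducing to $G$ because $G = \ker(\det \colon \Gamma \to \Gm)$ is precisely the stabilizer of the level set cutting out $\Uv$. Combining these identifies the endomorphism algebra of $\bigoplus_i S_i$ in $\fuk{\Uv}$ with that of a collection of simple $G$-equivariant matrix factorizations in $\emf{\bA^n}{G}{\bfw}$.

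The Koszul duality hypothesis now lets us pass from $\fuk{\Uv}$ to $\wfuk{\Uv}$: the endomorphism algebra of $\bigoplus_i L_i$ in $\wfuk{\Uv}$ is the Koszul dual of that of $\bigoplus_i S_i$ in $\fuk{\Uv}$. On the B-side, there is a matching algebraic Koszul duality between the simple generators of $\emf{\bA^n}{G}{\bfw}$ and a progenerator of the perfect derived category of the stacky quotient of $\Spec R$ by $G$, where $R = \bfk[x_1,\ldots,x_n]/(\bfw)$. Matching the two Koszul dualities lifts the compact-category identification to an equivalence at the level of the wrapped Fukaya category versus bounded coherent sheaves on the stacky quotient, under which $\fuk{\Uv}$ corresponds to perfect complexes. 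Taking the quotient yields $\sfuk{\Uv} \simeq \emf{\bA^n}{G}{\bfw}$, which is \pref{eq:double suspension}.

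The main obstacle is the careful tracking of equivariance groups through Kn\"orrer periodicity on the B-side to ensure that exactly $G$ (rather than $\Gamma$ or an intermediate subgroup) arises, together with the parallel A-side bookkeeping required to apply Seidel's directed-to-bidirectional formalism compatibly with the double suspension. A related subtle point is verifying that the Koszul duality between compact and wrapped Fukaya categories hypothesized on the A-side matches, under the mirror identifications, the algebraic Koszul duality between simple and projective generators on the B-side, so that the equivalence of compact $A_\infty$-algebras genuinely lifts to one of wrapped categories.
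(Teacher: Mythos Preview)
Your route diverges substantially from the paper's and contains a structural gap. The paper does not try to match $\fuk{\Uv}$ and $\wfuk{\Uv}$ directly with categories of sheaves on $\ld \Spec R \middle/ G \rd$. Instead it identifies both sides of \pref{eq:double suspension} with the $n$-cluster category $C_n(\scrA)$ of $\scrA \coloneqq \fuk{\bfwv} \simeq \emf{\bA^n}{\Gamma}{\bfw}$. On the B-side, graded Auslander--Reiten duality gives $\bS[-n] \simeq (-\chi_1-\cdots-\chi_n)$ on $\emf{\bA^n}{\Gamma}{\bfw}$, so the orbit category by $\bS[-n]$ is exactly $\emf{\bA^n}{G}{\bfw}$. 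On the A-side, Seidel's suspension gives $\fuk{\Uv} \simeq \perf \scrB$ for the degree-$(n+1)$ trivial extension $\scrB = \scrA \oplus \scrA^\dual[-n-1]$; Koszul duality of $\scrB$ with the $(n+1)$-Calabi--Yau completion $\scrG$, together with the hypothesis, gives $\wfuk{\Uv} \simeq \perf \scrG \simeq \pseu \scrB$; and Keller's theorem identifies the quotient $\pseu\scrB/\perf\scrB$ with $C_n(\scrA)$.

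The gap in your argument is the passage from $\Gamma$ to $G$. Equivariant Kn\"orrer periodicity relates $\emf{\bA^n}{\Gamma}{\bfw}$ to $\emf{\bA^{n+2}}{\Gamma\times\bmu_2\times\bmu_2}{\bfw+z^2+w^2}$; it does \emph{not} replace $\Gamma$ by its subgroup $G$. That reduction is precisely the orbit-category step handled by the Serre-functor computation above, and it is not a byproduct of suspension or stabilization on either side. Relatedly, your ``B-side Koszul duality'' is misframed: $\emf{\bA^n}{G}{\bfw}$ \emph{is} the quotient $\scoh X = \coh X/\perf X$, so speaking of its ``simple generators'' being Koszul dual to a progenerator of $\perf X$ puts it on the wrong side of the picture. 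To make your strategy go through you would need to establish $\wfuk{\Uv} \simeq \coh X$ and $\fuk{\Uv} \simeq \perf X$ directly from \pref{cj:HMS_sing}; but that is essentially \pref{cj:milnor} (after the Kn\"orrer reduction carried out in \pref{sc:invertible}), which the paper treats as a separate hypothesis leading to the parallel \pref{th:double suspension 2} rather than as a consequence of the present assumptions.
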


\pref{th:double suspension}
can be regarded as a
`stable'
homological mirror symmetry
for $\Uv$.
\pref{th:double suspension} implies \pref{eq:motivation}
since \pref{cj:HMS_sing}
and Koszul duality
are
known in this case.

The \emph{$n$-cluster category}
of a pretriangulated $A_\infty$-category $\scrA$
with a Serre functor $\bS$
is
defined as the orbit category
with respect to the shift
$\bS[-n]$
of the Serre functor
(see e.g.~\cite{MR3966760} and references therein).
\pref{eq:double suspension} is obtained as the composite of equivalences
\begin{align} \label{eq:CMbar=C}
\emf{\bA^n}{G}{\bfw} \simeq C_n \lb \emf{\bA^n}{\Gamma}{\bfw} \rb
\end{align}
and
\begin{align} \label{eq:sfuk=C}
C_n \lb \fuk{\bfwv} \rb
\simeq
\sfuk{\Uv}.
\end{align}
The relation between stable Fukaya categories
and cluster categories
was first pointed out in \cite{MR4442683}
and
studied further in \cite{2209.09442}.

\begin{proof}[Proof of \pref{eq:CMbar=C}]
Note that
one has an isomorphism
\begin{align}
(\chi) \simeq [2]
\end{align}
of endofunctors of
$
\emf{\bA^n}{\Gamma}{\bfw}.
$
Graded Auslander--Reiten duality
\cite{MR915178}
shows that
\begin{align}
\bS \coloneqq (\chi - \chi_1 - \cdots - \chi_n)[n-2]
\end{align}
is a Serre functor of
$
\emf{\bA^n}{\Gamma}{\bfw}
$
(see \cite[Theorem 2.5]{MR3063907}).
Hence
one has
\begin{align}
  \bS[-n] \simeq (-\chi_1-\cdots-\chi_n),
\end{align}
so that the orbit category of
$
\emf{\bA^n}{\Gamma}{\bfw}
$
with respect to
$
\bS[-n]
$
is equivalent to
the category of matrix factorizations of $\bfw$
graded by
$
\Gammahat/(-\chi_1-\cdots-\chi_n) \simeq \Ghat,
$
which is nothing but the category
$
\emf{\bA^n}{G}{\bfw}
$
of $G$-equivariant matrix factorizations of $\bfw$.
This concludes the proof of \pref{eq:CMbar=C}.
\end{proof}

\begin{proof}[Proof of \pref{eq:sfuk=C}]
Let $\scrG$ be
the $(n+1)$-Calabi--Yau completion
of
$
\scrA \coloneqq \fuk{\bfw}
$
in the sense of
\cite{MR2795754}.
Let further
\begin{align}
  \scrB \coloneqq \scrA \oplus \scrA^\dual[-n-1]
\end{align}
be 
the trivial extension algebra
of degree $n+1$
of $\scrA$,
where the $\scrA$-bimodule
\begin{align}
  \scrA^\dual \coloneqq \hom_{\bfk}(\scrA, \bfk)
\end{align}
is the graph of the Serre functor.
Then $\scrG$ is Koszul dual to $\scrB$
by \cite[Theorem 6]{MR4532824}.
It follows from \cite[Theorem 2]{MR2184464}
that
\begin{align}
  \pseu \scrB/\perf \scrB \simeq C_n(\scrA),
\end{align}
where
$\pseu \scrB$ is the category of \emph{pseudo-perfect $\scrB$-modules}
(i.e., dg modules over $\scrB$
which are perfect as $\bfk$-modules).

One has
\begin{align}
\fuk{\Uv} \simeq \perf \scrB
\end{align}
by \cite[Corollary 6.5]{MR2651908}.
Since $\fuk{\Uv}$ and $\wfuk{\Uv}$ are Koszul dual to each other,
one has
\begin{align}
\wfuk{\Uv} \simeq \perf \scrG.
\end{align}
The Koszul duality between $\scrG$ and $\scrB$ implies
\begin{align}
\pseu \scrB \simeq \perf \scrG,
\end{align}
and \pref{eq:sfuk=C} is proved.
\end{proof}

\section{More general invertible polynomials} \label{sc:invertible}

Let
\begin{align}
\Uv \coloneqq \lc (x_1,\ldots,x_{n+2}) \in \bC^{n+2} \relmid \bfWv = 1 \rc
\end{align}
be the Milnor fiber
of an invertible polynomial
\begin{align}
 \bfWv =  \sum_{i=1}^{n+2} \prod_{j=1}^{n+2} x_j^{a_{ji}}
\end{align}
in $n+2$ variables.
The group
\begin{align}
K \coloneqq
\lc (t_0, \ldots, t_{n+2}) \in (\Gm)^{n+3} \relmid
t_1^{a_{11}} \cdots t_n^{a_{1,n+2}}
= \cdots
= t_1^{a_{n+2,1}} \cdots t_n^{a_{n+2,n+2}}
= t_0 \cdots t_{n+2}
\rc
\end{align}
acts diagonally on $\bA^{n+3}$
making
$
\bfW-x_0 \cdots x_{n+2} \colon \bA^{n+3} \to \bA^1
$
equivariant,
where
\begin{align}
 \bfW =  \sum_{i=1}^{n+2} \prod_{j=1}^{n+2} x_j^{a_{ij}}
\end{align}
is the transpose of $\bfWv$.
Set
$
\Ut
\coloneqq 
\Spec
\left.
  \bfk \ld x_0,\ldots,x_{n+2} \rd
\middle/
  \lb \bfW - x_0 \cdots x_{n+2} \rb
\right.
$
and
$
U
\coloneqq
\ld \Ut \middle/ K \rd
$,
so that
\begin{align}
\emf{\bA^{n+3}}{K}{\bfW - x_0 \cdots x_{n+2}} \simeq \scoh U.
\end{align}

\begin{lemma} \label{lm:sing U}
The singular locus of $\Ut$ is the $x_0$-axis.
\end{lemma}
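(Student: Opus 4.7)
The plan is to compute the partial derivatives of
$f \coloneqq \bfW - x_0 x_1 \cdots x_{n+2}$
directly, and to exploit both the invertibility of the exponent matrix $A = (a_{kj})$ of $\bfW$ and the isolated-singularity hypothesis on $\bfW$. We have
$\partial_{x_0} f = -x_1 \cdots x_{n+2}$
and
$\partial_{x_i} f = \partial_{x_i} \bfW - x_0 \prod_{j \ne i, \, j \ge 1} x_j$
for $i \ge 1$. At any point $(x_0, 0, \ldots, 0)$ of the $x_0$-axis, both $f$ and each partial derivative vanish: an invertible polynomial has no constant or linear term, so $\bfW$ and every $\partial_{x_i} \bfW$ vanish at the origin in $(x_1, \ldots, x_{n+2})$, and each of the products above involving indices in $\{1, \ldots, n+2\}$ vanishes. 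This yields the inclusion of the $x_0$-axis in the singular locus.

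For the reverse inclusion, suppose $p$ is a singular point. The condition $\partial_{x_0} f(p) = 0$ yields $x_1 \cdots x_{n+2} = 0$ at $p$, so the defining equation $f(p) = 0$ forces $\bfW(p) = 0$. Multiplying the relation $\partial_{x_i} f(p) = 0$ by $x_i$ absorbs the second term into $x_0 \cdot x_1 \cdots x_{n+2} = 0$, producing
\begin{equation*}
x_i \partial_{x_i} \bfW(p) = 0, \qquad i = 1, \ldots, n+2.
\end{equation*}
Setting $M_k \coloneqq \prod_j x_j^{a_{kj}}$, the Euler-type identity $x_i \partial_{x_i} \bfW = \sum_k a_{ki} M_k$ then turns the vanishing into a linear system in $(M_k(p))_k$ with coefficient matrix $A^T$, and invertibility of $A$ forces $M_k(p) = 0$ for every $k$. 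Feeding this back into $\partial_{x_i} f(p) = 0$ gives the refined identity $\partial_{x_i} \bfW(p) = x_0 \prod_{j \ne i, \, j \ge 1} x_j$ for each $i \ge 1$.

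The proof concludes by case analysis on $S \coloneqq \{j \ge 1 : x_j(p) = 0\}$, which is non-empty by the previous step. If $|S| \ge 2$, every product $\prod_{j \ne i, \, j \ge 1} x_j$ contains two distinct zero factors and thus vanishes, so $\partial_{x_i} \bfW(p) = 0$ for every $i \ge 1$; combined with $\bfW(p) = 0$ and the isolated-singularity hypothesis, this forces $x_1(p) = \cdots = x_{n+2}(p) = 0$, as required. The case $|S| = 1$, with some $x_{i_0}(p) = 0$ the unique vanishing coordinate, would require $a_{k, i_0} \ge 1$ for every $k$, so that the $i_0$-th column of the exponent matrix would contain no zero entry. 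The main obstacle, and the only place where the combinatorial structure of invertible polynomials enters non-trivially, is ruling out this case; I plan to do so by invoking the decomposition of an invertible polynomial into a Thom--Sebastiani sum of Fermat, chain, and loop atomic pieces, which for $n+2 \ge 3$ always produces a zero in every column of the exponent matrix.
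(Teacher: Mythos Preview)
Your argument is correct and differs from the paper's mainly in how the classification of invertible polynomials is invoked. The paper reasons directly: once some coordinate (say $x_1$) vanishes, the products $x_0 \prod_{j \ne i} x_j$ for $i \ge 2$ all vanish, so $\partial_{x_i} \bfW = 0$ for all $i \ge 2$; the classification is then used to conclude that a \emph{second} coordinate must vanish, after which all partials of $\bfW$ vanish and the isolated-singularity hypothesis finishes. You instead route through the Euler-type identity $x_i \,\partial_{x_i} \bfW = \sum_k a_{ki} M_k$ and invertibility of $A$ to get $M_k = 0$ for every $k$, so that the classification enters only via the elementary observation that (for $n+2 \ge 3$) every column of $A$ contains a zero entry---immediate from the block-diagonal shape, since either there are at least two atomic blocks, or the single block is a chain or loop of length at least three, whose columns each carry at most two nonzero entries. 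Your packaging of the combinatorial input is cleaner to state and verify; the paper's intermediate claim requires a more hands-on walk through the atomic types.

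Two small notes. In the $|S| \ge 2$ case the products $\prod_{j \ne i,\, j \ge 1} x_j$ need only contain \emph{one} zero factor (which they do, since removing a single index still leaves an element of $S$), not two; your conclusion is unaffected. And your explicit restriction to $n+2 \ge 3$ is genuinely needed for the statement, not just your method: for the two-variable loop $\bfW = x_1^{2} x_2 + x_1 x_2^{2}$ one has $\bfW - x_0 x_1 x_2 = x_1 x_2 (x_1 + x_2 - x_0)$, whose singular locus is a union of three lines rather than the $x_0$-axis alone.
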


\begin{proof}
The singular locus of $\Ut$ is defined
inside the ambient space
$
\bA^{n+3} = \Spec \bfk[x_0,\ldots,x_{n+2}]
$
by
\begin{align}
\bfW - x_0 \cdots x_{n+2}
= x_1 \cdots x_{n+2}
= \frac{\partial \bfW}{\partial x_1} - x_0 x_2 \cdots x_{n+2}
= \cdots
= \frac{\partial \bfW}{\partial x_{n+2}} - x_0 \cdots x_{n+1}
= 0.
\end{align}
It follows from $x_1 \cdots x_{n+2}=0$ that $x_i = 0$
for some $i \in \{ 1, \ldots, n+2 \}$,
and one may assume $i=1$ without loss of generality.
Then one has
\begin{align} \label{eq:partial 2 to n+2}
\frac{\partial \bfW}{\partial x_2}
= \cdots
= \frac{\partial \bfW}{\partial x_{n+2}}
= 0,
\end{align}
and one can show
using the classification of invertible polynomials
\cite{MR1188500}
that \pref{eq:partial 2 to n+2} implies
$x_j = 0$ for some $j \in \{ 2, \ldots, n+2 \}$.
Then one has
\begin{align}
\frac{\partial \bfW}{\partial x_1} = x_0 x_2 \cdots x_{n+2}
= 0,
\end{align}
which together with \pref{eq:partial 2 to n+2}
implies $x_1=\cdots=x_{n+2}=0$
since $\bfW$ is an invertible polynomial.
\end{proof}

If $\bfW$ is not of Calabi--Yau type,
then any point of
$
\Ut
$
satisfying $x_0 \ne 0$
can be brought to $x_0 = 1$ by the action of $K$,
so that
the complement
$
D \coloneqq U \setminus E
$
of the closed substack $E$ of $U$
defined by
$
x_0 = 0
$
can be identified with
$
\ld \Dt \middle/ H \rd
$
where
\begin{align}
\Dt
\coloneqq
\Spec \bfk[x_1,\ldots,x_{n+2}]/(\bfW - x_1 \cdots x_{n+2})
\end{align}
and
\begin{align}
H
&\coloneqq \lc (t_1,\ldots,t_{n+2}) \in (\Gm)^{n+2}
\relmid
t_1^{a_{11}} \cdots t_n^{a_{1,n+2}}
= \cdots
= t_1^{a_{n+2,1}} \cdots t_n^{a_{n+2,n+2}}
= t_1 \cdots t_{n+2} \rc
\label{eq:H} \\
&\cong K \cap \lc t_0 = 1 \rc.
\end{align}
Since the intersection of the singular locus of $U$
with $E$ is the origin,
the full subcategory
$
\scoh_E U
$
of
$
\scoh U \coloneqq \coh U / \perf U
$
consisting of objects supported on $E$
is equivalent
to the full subcategory
$\scoh_0 U$
consisting of objects supported at the origin;
\begin{align} \label{eq:scoh_0}
  \scoh_0 U \simeq \scoh_E U.
\end{align}

\begin{conjecture} \label{cj:milnor}
For any invertible polynomial $\bfW$
not of Calabi--Yau type,
one has equivalences
\begin{align} \label{eq:wfuk}
\scoh U
\simeq
\wfuk{\Uv}
\end{align}
and
\begin{align} \label{eq:fuk}
\scoh_0 U
\simeq
\fuk{\Uv}
\end{align}
of $\infty$-categories.
\end{conjecture}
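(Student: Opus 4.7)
The plan is to prove \pref{cj:milnor} for Brieskorn--Pham polynomials by combining the two ingredients advertised in the introduction: a Seidel--Sheridan-type deformation argument together with the Koszul duality \cite[Theorem 6.11]{MR4442683} between $\fuk{\Uv}$ and $\wfuk{\Uv}$. A first step is a reduction. A parallel Koszul duality should hold on the algebraic side, pairing the $K$-equivariant skyscrapers at the origin (which, by \pref{eq:scoh_0}, generate $\scoh_0 U$) with a compact generating collection of $\scoh U$ built from line-bundle twists on the smooth locus $D = U \setminus E$. Granted this algebraic Koszul duality, it suffices to establish just one of \pref{eq:fuk} and \pref{eq:wfuk} on the level of distinguished Koszul-dual generating collections; the other then follows by transport of structure across Koszul duality on both sides.

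I would work with the compact version \pref{eq:fuk}. On the B-side, the endomorphism $A_\infty$-algebra of the simples at the origin is identified with a Koszul dual of the completed local ring of $U$ at the origin. For $\bfW - x_0 \cdots x_{n+2}$ with $\bfW$ Brieskorn--Pham, this local algebra has a Sebastiani--Thom-type product structure inherited from the diagonal form of $\bfW$, and its Koszul dual admits an explicit presentation graded by the character group of $K$. On the A-side, the matching generating collection is a basis of Lagrangian vanishing cycles produced from the standard Lefschetz fibration model of a Brieskorn--Pham Milnor fiber; its cohomological endomorphism algebra matches the B-side one as a graded algebra via an intersection-number calculation.

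What remains is to match $A_\infty$-structures. Following Seidel and Sheridan, one realises both as deformations of a common \emph{undeformed} model---on the B-side, that associated to the toric boundary $\{x_0 \cdots x_{n+2} = 0\}$, and on the A-side, a suitable product model of Sheridan type---and uses that the relevant weight-piece of $HH^2$ controlling the deformation is one-dimensional. The B-side deformation class is manifestly the Maurer--Cartan element $\bfW - x_0 \cdots x_{n+2}$; the A-side class is concentrated in a single higher product $\mu^m$ counting a distinguished holomorphic polygon. The main obstacle is to compute this $\mu^m$ on the nose and to verify that, in the appropriate equivariant grading, it agrees with $x_0 \cdots x_{n+2}$ up to a unit. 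This step is combinatorial rather than deep, but demands careful bookkeeping of grading, sign, and equivariant data; once secured, a standard obstruction-theoretic argument lifts the cohomological match to an $A_\infty$-equivalence, and the two Koszul dualities then transport the result to the wrapped side to give \pref{eq:wfuk}.
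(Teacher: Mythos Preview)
Your overall architecture---prove Koszul duality on the matrix-factorization side, invoke \cite[Theorem 6.11]{MR4442683} on the Fukaya side, and thereby reduce everything to the compact equivalence \pref{eq:fuk} established by a Seidel--Sheridan deformation argument---is exactly the paper's. Two points in the execution, however, do not match what the paper does and would need to be repaired.

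First, the object on which the deformation argument runs. The paper does \emph{not} compute the $A_\infty$-structure on the vanishing cycles directly. It uses the lift $\LSSt$ of Sheridan's immersed sphere $\LSS$ in the pair of pants, whose endomorphism cohomology is the exterior algebra on $\theta_0,\ldots,\theta_{n+2}$; only for this object does Kontsevich formality translate the problem into identifying a Maurer--Cartan element in polyvector fields. Having matched $\fukzero{\Uv}$ with $\emfzero{\bA^{n+3}}{K}{\bfV}$ this way (\pref{pr:fuk_0=mf_0}), the paper must then prove \emph{separately} that $\LSSt$ split-generates $\fuk{\Uv}$ (\pref{pr:fuk_0 U=fuk U}), using the Koszul-dual thimbles $L_{\bsi}$ to identify one component of $\LSSt$ with the vanishing cycle $V_{\bszero}$ and then the $\Mbar$-symmetry. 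Your plan skips this generation step and proposes to run the deformation argument on the vanishing cycles themselves, where the exterior-algebra model and the ensuing $\HH^*$-computation are not available.

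Second, and more substantively, the deformation you name is the wrong one. The ``single higher product $\mu^m$ \ldots agreeing with $x_0\cdots x_{n+2}$'' is Sheridan's big polygon, i.e.\ the Maurer--Cartan element $\bfV_0=-z_0\cdots z_{n+2}$ encoding the $A_\infty$-structure on $\LSS$ \emph{inside the pair of pants} $\cPt$---the very thing you declared to be the undeformed B-model. What distinguishes $\Uv$ from $\cPt$ is the compactifying divisors $E_i=\{x_i=0\}$, and the passage to $\fuk{\Uv}$ is a \emph{further} $\bGt$-graded deformation over $R=\bfk[r_1,\ldots,r_{n+2}]$ obtained by counting disks weighted by their intersections with the $E_i$. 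The paper computes that the only degree-$2$ classes in $\HH^*(\cA,\cA\otimes\frakm/\frakm^2)$ are $r_i\prod_j z_j^{a_{ij}}$, that higher-order pieces vanish, and that the first-order class is indeed $\sum_i r_i\prod_j z_j^{a_{ij}}=\bfW$ (the nonvanishing being imported from \cite{MR4153652}, not from a direct polygon count). So the element to be matched is $\bfV=\bfW-z_0\cdots z_{n+2}$, with $\bfW$ the genuinely new content; computing only the $x_0\cdots x_{n+2}$ polygon yields $\cPt$, not $\Uv$.
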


\pref{eq:wfuk} is \cite[Conjecture 1.4]{MR4442683},
from which \pref{eq:fuk} should follow
as the restriction
to the full subcategories
consisting of objects $X$
such that $\hom(X,Y)$ is perfect as a $\bfk$-module
for any $Y$.
\cite[Theorem 1.1]{MR4404801}
gives \pref{eq:fuk}
for $n=0$,
and a $\bZ/2\bZ$-graded variant of \pref{eq:wfuk}
is discussed in \cite{MR4713718}.

\begin{theorem} \label{th:double suspension 2}
If \pref{cj:milnor} holds
for the double suspension
of an invertible polynomial $\bfw$,
then one has
an equivalence
\begin{align} \label{eq:double suspension 2}
\emf{\bA^n}{G}{\bfw} \simeq \sfuk{\Uv}
\end{align}
of $\infty$-categories.
\end{theorem}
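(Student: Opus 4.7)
The plan is to split the proof into two steps. In the first step, I derive from \pref{cj:milnor} a ``stable'' homological mirror symmetry $\sfuk{\Uv} \simeq \scoh D$, which is the equivalence \pref{eq:stable hms} foreshadowed in the introduction. In the second step, I apply equivariant Kn\"orrer periodicity on the algebraic side to identify $\scoh D$ with $\emf{\bA^n}{G}{\bfw}$.

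For the first step, I apply \pref{cj:milnor} to the double suspension $\bfW = \bfw + z^2 + w^2$ to obtain the equivalences $\scoh U \simeq \wfuk{\Uv}$ and $\scoh_0 U \simeq \fuk{\Uv}$. Combining these with the identification $\scoh_0 U \simeq \scoh_E U$ provided by \pref{eq:scoh_0} and passing to Verdier quotients yields
\begin{align*}
\sfuk{\Uv} = \wfuk{\Uv} / \fuk{\Uv} \simeq \scoh U / \scoh_0 U \simeq \scoh U / \scoh_E U \simeq \scoh D,
\end{align*}
where the final equivalence is the standard localization of the singularity category along the open substack $D = U \setminus E$.

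For the second step, Orlov's theorem identifies
\begin{align*}
\scoh D \simeq \emf{\bA^{n+2}}{H}{\bfw + x_{n+1}^2 + x_{n+2}^2 - x_1 \cdots x_{n+2}}.
\end{align*}
The two ``suspension'' variables $x_{n+1}, x_{n+2}$ enter through the quadratic form $x_{n+1}^2 + x_{n+2}^2 - (x_1 \cdots x_n)\, x_{n+1} x_{n+2}$; a suitable (equivariant) change of variables followed by Kn\"orrer periodicity should eliminate both of these variables and produce $\emf{\bA^n}{G}{\bfw}$. The reduction of the grading group should match as well: $H$ is an extension of $G$ by characters corresponding to the two eliminated variables, and quotienting out these characters recovers precisely the subgroup $G = \lc (t_1,\ldots,t_n) \in \Gamma \relmid t_1 \cdots t_n = 1 \rc$ appearing in the statement.

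The main obstacle will be carrying out the Kn\"orrer reduction in a way that is both equivariant and compatible with the ``variable coefficient'' in the quadratic form: the cross term $(x_1 \cdots x_n)\, x_{n+1} x_{n+2}$ has coefficient a monomial in the remaining variables rather than a scalar, so one cannot diagonalize by a purely linear change of variables with constant entries, and the version of Kn\"orrer periodicity one invokes must accommodate this. A secondary subtlety is that the localization $\scoh U / \scoh_E U \simeq \scoh D$ used in the first step has to be justified at the level of idempotent-complete stable $\infty$-categories; this is standard given \pref{lm:sing U}, which shows that $\Ut$ is regular away from a single affine line and hence that $E$ meets the singular locus only in a controlled way.
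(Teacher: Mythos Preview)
Your two-step plan is exactly the paper's: first deduce \pref{eq:stable hms} from \pref{cj:milnor} via the localization $\scoh U/\scoh_E U \simeq \scoh D$, then reduce $\scoh D$ to $\emf{\bA^n}{G}{\bfw}$ by Kn\"orrer periodicity. The first step is fine as you wrote it.

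The gap is in the second step: you correctly flag that the cross term $(x_1\cdots x_n)\,x_{n+1}x_{n+2}$ obstructs a polynomial diagonalization, but you do not say how to get around it, and ``a version of Kn\"orrer periodicity that accommodates this'' is not something you can invoke off the shelf here. The paper's resolution is not to generalize Kn\"orrer but to pass through formal completions. Concretely: one first checks that the singular locus of $\Dt$ is only the origin (this uses that $\bfW$ is not of Calabi--Yau type, which is automatic for a double suspension since $d_{n+1}+d_{n+2}=h$), and then observes that in $\bfk\llbracket x_1,\ldots,x_{n+2}\rrbracket$ one can complete the square,
\[
\bfw + x_{n+1}^2 + x_{n+2}^2 - x_1\cdots x_{n+2}
= \bfw + \Big(\sqrt{1-\tfrac14(x_1\cdots x_n)^2}\,x_{n+1}\Big)^2
+ \Big(x_{n+2}-\tfrac12 x_1\cdots x_{n+1}\Big)^2,
\]
the square root being a unit in the power series ring. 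Since singularity categories depend only on the formal neighbourhood of the singular locus (Orlov, \cite[Theorem~2.10]{MR2735755}), this gives $\scoh D \simeq \scoh D'$ with $D' = [\{\bfW=0\}/H]$. Now the potential is the honest Sebastiani--Thom sum $\bfw + x_{n+1}^2 + x_{n+2}^2$, the group factors as $H \cong G \times \bmu_2 \times \bmu_2$, and ordinary equivariant Kn\"orrer periodicity finishes the job. Without the formal-completion step you have no equivariant coordinate change over $\bfk[x_1,\ldots,x_n]$ that diagonalizes the quadratic form, so this is the missing idea rather than a routine detail.
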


\begin{proof}
The equivalence
\begin{align}
  \coh D \simeq \coh U / \coh_E U
\end{align}
induces an equivalence
\begin{align}
  \scoh D \simeq \scoh U / \scoh_E U,
\end{align}
which together with \pref{eq:scoh_0} gives an equivalence
\begin{align} \label{eq:scoh D}
  \scoh D \simeq \scoh U / \scoh_0 U.
\end{align}
It follows that
if $\bfW$ is an invertible polynomial
not of Calabi--Yau type
such that \pref{cj:milnor} holds,
then one has an equivalence
\begin{align} \label{eq:stable hms}
\scoh D \simeq \sfuk{\Uv}
\end{align}
of $\infty$-categories.

If $\bfW$ is not of Calabi--Yau type,
then the singular locus of $\Dt$ is the origin.
Indeed,
the singular locus of $\Dt$ is defined
inside the ambient space
$
\bA^{n+2} = \Spec \bfk[x_1,\ldots,x_{n+2}]
$
by
\begin{align}
\bfW - x_1 \cdots x_{n+2}
= \frac{\partial \bfW}{\partial x_1} - x_2 \cdots x_{n+2}
= \cdots
= \frac{\partial \bfW}{\partial x_{n+2}} - x_1 \cdots x_{n+1}
= 0.
\end{align}
By multiplying $d_i x_i$ and summing over $i$,
one obtains
\begin{align} \label{eq:non-weighted-homogeneous}
 h \bfW - \sum_{i=1}^{n+2} d_i x_1 \cdots x_{n+2} = 0,
\end{align}
which together with
$
\bfW - x_1 \cdots x_{n+2} = 0
$
implies
$
\bfW = x_1 \cdots x_{n+2} = 0
$.
Now we can argue
as in the proof of \pref{lm:sing U}
that $x_1=\cdots=x_{n+2}=0$.

The double suspension
$
\bfw(x_1,\ldots,x_n) + x_{n+1}^2 + x_{n+2}^2
$
is not of Calabi--Yau type
since
$
2 d_{n+1} = 2 d_{n+2} = h
$
and hence
$
\sum_{i=1}^{n+2} d_i > h
$.

Set
$
D' \coloneqq \ld \Dt' \middle/ H \rd
$
where
$
\Dt'
\coloneqq
\Spec \left. \bfk[x_1,\ldots,x_{n+2}] \middle/ (\bfW) \right.
$.
The singular loci of both $\Dt$ and $\Dt'$ are the origin,
and the formal completions of $\Dt$ and $\Dt'$ at the origin
are isomorphic since
\begin{multline}
  \bfw(x_1,\ldots,x_{n}) + x_{n+1}^2 + x_{n+2}^2 - x_1 \cdots x_{n+2} \\
  =\bfw(x_1,\ldots,x_{n})
  +\left( \sqrt{1-\frac{1}{4} (x_1 \cdots x_{n})^2} x_{n+1} \right)^2
  +\left( x_{n+2}-\frac{1}{2} x_1 \cdots x_{n+1} \right)^2
\end{multline}
in
$
\bfk \llbracket x_1,\ldots, x_{n+2} \rrbracket
$.
It follows that
\begin{align}
\scoh D
\simeq
\scoh D'
\end{align}
by \cite[Theorem 2.10]{MR2735755}.
The isomorphism
\begin{align}
H \cong G \times \bmu_2 \times \bmu_2
\end{align}
and the Kn\"{o}rrer periodicity
\cite[Proposition 2.1]{MR877010}
shows
\begin{align}
\scoh D'
\simeq
\emf{\bA^n}{G}{\bfw},
\end{align}
and \pref{th:double suspension} is proved.
\end{proof}

\section{Homological mirror symmetry for Milnor fibers of Brieskorn--Pham singularities} \label{sc:hms}

We use the same notations as in \pref{sc:invertible}.
We prove the following theorem in this section:

\begin{theorem} \label{th:hms for U}
\pref{cj:milnor} holds
if $\bfW$ is a Brieskorn--Pham polynomial.
\end{theorem}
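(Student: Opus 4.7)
The plan is to follow the Seidel--Sheridan deformation-theoretic template to establish the compact equivalence \pref{eq:fuk}, and then upgrade it to the wrapped equivalence \pref{eq:wfuk} via the Koszul duality of \cite[Theorem 6.11]{MR4442683}, which is available for Brieskorn--Pham $\bfW$ not of Calabi--Yau type.

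For \pref{eq:fuk} I would first fix matching generators on the two sides. On the A-side, take the Lagrangian vanishing cycles $S_1, \ldots, S_\mu$ in $\Uv$ coming from a standard Morsification of $\bfWv$; by \cite[Corollary 6.5]{MR2651908} they split-generate $\fuk{\Uv}$, and cohomologically their total endomorphism $A_\infty$-algebra is the trivial extension $\scrB = \scrA \oplus \scrA^{\dual}[-n-1]$ of $\scrA = \fuk{\bfw}$ appearing in \pref{eq:sfuk=C}. On the B-side, I would construct corresponding generators of $\scoh_0 U$ supported at the origin: by \pref{lm:sing U} the singular locus of $\Ut$ is the $x_0$-axis, and for a Brieskorn--Pham $\bfW$ the local geometry at the origin admits a Kn\"{o}rrer-style reduction (in the spirit of the one used in the proof of \pref{th:double suspension 2}) to $K$-equivariant matrix factorizations, whose residue-field-type objects have Ext algebra isomorphic to $\scrB$ at the cohomological level. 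Matching the $A_\infty$-structure is then the heart of the argument: compute $\HH^2$ of the formal model of $\scrB$ in the relevant internal weight, use the large torus equivariance provided by the Brieskorn--Pham form to cut it down to a small parameter family of admissible deformations, and fix the A- and B-side points in this family by computing a single nonzero higher product (e.g.\ a Massey product) on each side.

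Once \pref{eq:fuk} is established, \pref{eq:wfuk} follows by Koszul duality. By \cite[Theorem 6.11]{MR4442683} the categories $\fuk{\Uv}$ and $\wfuk{\Uv}$ are Koszul dual, and by \cite[Theorem 6]{MR4532824} the Koszul dual of the trivial extension $\scrB$ is the $(n+1)$-Calabi--Yau completion $\scrG$ of $\scrA$, giving $\wfuk{\Uv} \simeq \perf \scrG$. A parallel purely algebraic Koszul duality on the B-side identifies $\scoh U$ with $\perf \scrG$; combined with \pref{eq:scoh D} and \pref{eq:fuk}, this yields \pref{eq:wfuk}.

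The hard part is the Hochschild cohomology calculation underpinning the deformation match. One needs simultaneously enough rigidity (that $\HH^2$ in the relevant weight is low-dimensional) and an explicit nonzero higher structure constant on each side. For Brieskorn--Pham $\bfW$ the torus equivariance is powerful enough to make both the rigidity statement and the Massey-product computations tractable, but the weight bookkeeping and the identification of a distinguished nontrivial operation that is pinned down on both sides is the most delicate step.
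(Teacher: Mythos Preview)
Your high-level strategy (deformation-theoretic match for the compact category, then Koszul duality for the wrapped one) agrees with the paper, but the implementation differs in a way that matters.

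The paper does \emph{not} run the deformation argument on the vanishing cycles with cohomology algebra the trivial extension $\scrB$. Instead it works in the pair-of-pants model: $\Uv$ contains an abelian cover $\cPt$ of the $n$-dimensional pair of pants $\cP$, and the generator used is the lift $\LSSt$ of the Seidel--Sheridan immersed sphere $\LSS$. The cohomology endomorphism algebra of $\LSS$ is the \emph{exterior} algebra on $n{+}3$ generators, so Kontsevich formality/HKR identifies its Hochschild cochains with polyvector fields on $\bA^{n+3}$, and the $A_\infty$-structure corresponds to the Maurer--Cartan element $-z_0\cdots z_{n+2}$. The $\bG$-grading (coming from $H_1(\LGr(\cP))$) then forces $\HH^2$ in the relevant weight to be spanned by the monomials $r_i\prod_j z_j^{a_{ij}}$, and higher obstructions vanish; the first-order class is pinned down by results of Sheridan and by direct $n=0$ computations. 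This gives $\fukzero{\Uv}\simeq\emfzero{\bA^{n+3}}{K}{\bfV}$ (Proposition~\ref{pr:fuk_0=mf_0}). A separate, Brieskorn--Pham--specific step (Proposition~\ref{pr:fuk_0 U=fuk U}) shows that $\LSSt$ already split-generates $\fuk{\Uv}$, by identifying one component of $\LSSt$ with a vanishing cycle via the Koszul-dual thimbles. Finally, the B-side Koszul duality (Proposition~\ref{pr:duality for mf}) is proved \emph{concretely} by exhibiting dual collections $(\cL_\bsi)$ and $(\cS_\bsi)$ of matrix factorizations and using the dilating $\Gm$-action on the singular locus, rather than by appealing to an abstract identification $\scoh U\simeq\perf\scrG$.

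The gap in your plan is the Hochschild step. For the exterior algebra the HKR picture makes the degree-2 deformation space and the vanishing of higher obstructions completely explicit; for the trivial extension $\scrB$ of the Fukaya--Seidel category you have no such description, and ``torus equivariance is powerful enough'' is not a substitute for it. Likewise, your B-side claims (that residue-field objects in $\scoh_0 U$ have Ext-algebra $\scrB$, and that a ``parallel algebraic Koszul duality'' yields $\scoh U\simeq\perf\scrG$) are exactly the content the paper supplies by other means and cannot be taken for granted. In short: same skeleton, but the paper's choice of the Seidel--Sheridan generator is what makes the deformation calculation go through, and you are missing both that ingredient and the two auxiliary propositions (generation by $\LSSt$, and explicit B-side duality) that complete the argument.
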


Let $M$ be the free abelian group generated by
$
\{ \bfe_i \}_{i=1}^{n+2},
$
and
$\Mt$ be the subgroup
generated by
$
\lc
\bff_j
\coloneqq
\sum_{i=1}^{n+2} a_{ji} \bfe_i
\rc_{i=1}^{n+2}.
$
We write the inclusion $\Mt \hookrightarrow M$ as $\varphi$.

Let
\begin{align}
\cP \coloneqq \lc (y_1,\ldots,y_{n+2}) \in M_{\bCx} \coloneqq M \otimes \bCx
\relmid y_1 + \cdots + y_{n+2} + 1 = 0 \rc
\end{align}
be an $n$-dimensional pair of pants,
and
set
$
\cPt \coloneqq \varphi_{\bCx}^{-1} (\cP)
$
where
\begin{align}
\varphi_{\bCx} \coloneqq \varphi \otimes \bCx \colon \Mt_{\bCx} \to M_{\bCx}, \quad
\lb x_i \rb_{i=1}^{n+2} \mapsto
\lb y_i = \prod_{j=1}^{n+2} x_j^{a_{ji}} \rb_{i=1}^{n+2}.
\end{align}
The closure of $\cPt$ in
$
\Mt_{\bC}
$
is identified with the Milnor fiber
\begin{align}
\Uv = \lc (x_1,\ldots,x_{n+2}) \in \bC^{n+2} \relmid \bfWv(x_1,\ldots,x_{n+2})+1=0 \rc
\end{align}
of $\bfWv$.

We equip $\Uv$ with the grading
defined by the tensor square $\Omega_\Uv^{\otimes 2}$
of the holomorphic volume form
\begin{align}
 \Omega_\Uv
  \coloneqq \Res \frac{d x_1 \wedge \cdots \wedge d x_{n+2}}{\bfWv(x_1,\ldots,x_{n+2})+1}.
\end{align}
If $n \ge 1$,
then
the choice of a grading of $\Uv$ is unique
because of the simple connectivity of $\Uv$.

The divisors
\begin{align}
E_i = \lc (x_1,\ldots,x_{n+2}) \in \Uv \relmid x_i = 0 \rc, \quad
i = 1, \ldots, n+2
\end{align}
are smooth,
and one has
$
 \Uv \setminus \cPt = \bigcup_{i=1}^{n+2} E_i.
$
If $n \ge 1$,
then $E_i$ is non-empty and connected
for any $i=1,\ldots,n+2$.

Let $\cPua$ be the universal abelian cover of $\cP$,
which agrees with the universal cover
if $n \ge 1$.
The inclusion
$
\cP \hookrightarrow M_{\bCx}
$
induces
an isomorphism
$
H_1(\cP) \simto H_1 \lb M_{\bCx} \rb \cong M,
$
so that $M$ is identified
with the group
$\Deck \lb \cPua \to \cP \rb$
of deck transformations.
The group
$
\Deck \lb \cPua \to \cPt \rb
$
is naturally identified with $\Mt$,
so that
$
\Mbar \coloneqq \operatorname{Deck} \lb \cPt \to \cP \rb
$
is identified with
$
M / \Mt.
$

The Lagrangian immersion
$
\LSS
$
from an $n$-sphere
to $\cP$
introduced by Seidel and Sheridan
\cite{MR2819674,MR2863919}
lifts to a Lagrangian immersion
$\LSSt$
from the disjoint union
of $|\Mbar|$ copies of spheres
to $\cPt$.
Let
$
\fukzero {U}
$
be the full subcategory of $\fuk U$
split-generated by $\LSSt$.

Let
\begin{align}
  \bfV(z_0,\ldots,z_{n+2}) \coloneqq \bfW(z_1,\ldots,z_{n+2}) - \prod_{i=0}^{n+2} z_i
\end{align}
be a semi-invariant element of $\bfk[z_0,\ldots,z_{n+2}]$
with respect the natural action of
\begin{align}
K \coloneqq \lc (t_0,\ldots,t_{n+2}) \in (\Gm)^{n+2} \relmid
\prod_{j=1}^{n+2} t_j^{a_{ij}} = \prod_{j=0}^{n+2} t_j
\text{ for any } i \in \{ 1, \ldots, n+2\} \rc,
\end{align}
and
$
\emfzero{\bA^{n+2}}{K}{\bfV}
$
be the full subcategory of
the dg category
$
\emf{\bA^{n+2}}{K}{\bfV}
$
of $K$-equivariant matrix factorizations of $\bfV$
split-generated by the structure sheaf of the origin.

\begin{proposition} \label{pr:fuk_0=mf_0}
One has an equivalence
$
\fukzero{U}
\simeq
\emfzero{\bA^{n+2}}{K}{\bfV}
$.
\end{proposition}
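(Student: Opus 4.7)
The plan is to compute the $A_\infty$-endomorphism algebra of the distinguished object on each side, exhibit a quasi-isomorphism between them, and then extend the equivalence to the split-generated subcategories.

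On the B-side, a $K$-equivariant Koszul resolution of $\mathcal{O}_0$ shows that the endomorphism algebra $B$ of $\mathcal{O}_0$ in $\emf{\bA^{n+2}}{K}{\bfV}$ has underlying cohomology isomorphic to the exterior algebra $\Lambda(\theta_0, \ldots, \theta_{n+2})$ on degree-one generators, graded by the character group of $K$ with weights $\chi_0, \ldots, \chi_{n+2}$, and the higher $A_\infty$-operations are governed by the monomials of the potential $\bfV$. On the A-side, the Floer endomorphism algebra $A$ of $\LSSt$ in $\fuk{\Uv}$ has the same underlying cohomology: Sheridan's computation of the Floer cohomology of $\LSS$ in the pair of pants $\cP$ yields an exterior algebra on $n+2$ generators, one for each component of the divisor complement, and pulling back along the cover $\cPt \to \cP$ produces the same graded exterior algebra under the identification of $M$ with the character group of $K$ induced by the action of $K$ on $\bA^{n+3}$.

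To identify $A$ and $B$ as $A_\infty$-algebras, one invokes the Seidel--Sheridan deformation-theoretic argument. The relevant Hochschild cochain complex of $\Lambda$ admits only restricted $A_\infty$-deformations in each character weight, so the minimal $A_\infty$-structure is determined, up to gauge equivalence and rescaling of the generators $\theta_i$, by the leading coefficients of the operations $m_{a_i}(\theta_i, \ldots, \theta_i)$ for $i = 1, \ldots, n+2$ (encoding the Brieskorn monomials $z_i^{a_i}$) and of $m_{n+3}(\theta_0, \theta_1, \ldots, \theta_{n+2})$ (encoding the monomial $z_0 z_1 \cdots z_{n+2}$). On the B-side these coefficients are built into the resolution; on the A-side they are disk counts in $\Uv$ bounded by $\LSSt$. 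Using Sheridan's localization to a neighborhood of each compactifying divisor $E_i$, one shows that each such homotopy class admits a unique regular holomorphic disk (up to the deck group action) with nonzero signed contribution, and after a simultaneous rescaling of the $\theta_i$ the leading coefficients can be matched with those of $\bfV$.

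A Yoneda-type argument then upgrades the quasi-isomorphism $A \simeq B$ of endomorphism $A_\infty$-algebras to the claimed equivalence $\fukzero{U} \simeq \emfzero{\bA^{n+2}}{K}{\bfV}$ of idempotent-complete $\infty$-categories split-generated by the two matched objects. The main obstacle is the A-side disk count, specifically verifying nonvanishing and the correct relative normalization of the structure constants; this is the technical heart of the Seidel--Sheridan method, and it is rendered tractable for Brieskorn--Pham polynomials by the toric geometry of $\Uv$ and the explicit combinatorics of the Seidel--Sheridan Lagrangian inside the cover $\cPt$.
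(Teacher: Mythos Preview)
Your outline is correct and follows the same Seidel--Sheridan deformation strategy as the paper. Two points of precision: the exterior algebra has $n+3$ generators $\theta_0,\ldots,\theta_{n+2}$ (not $n+2$), and the endomorphism algebra of the $|\Mbar|$-component lift $\LSSt$ is not itself an exterior algebra---the paper obtains $\cAt$ from the $\bG$-graded algebra $\cA$ of the single sphere $\LSS$ by a change-of-grading procedure (direct sum of shifts, then restrict to $\bGt$-degrees). Organizationally, the paper separates the argument into two stages rather than matching all monomials of $\bfV$ at once: the pair-of-pants $A_\infty$-structure on $\cA$ (Maurer--Cartan element $-z_0\cdots z_{n+2}$ under Kontsevich formality) is taken as known from Sheridan, and the passage from $\cPt$ to $\Uv$ is then encoded as a further graded deformation $\cAt_R$ over $R=\bfk[r_1,\ldots,r_{n+2}]$, whose higher-order obstructions vanish by an explicit $\bG$-degree computation showing $\HH^2(\cA,\cA\otimes\frakm^i)=0$ for $i\ge2$; the remaining first-order class $\sum_i r_i\prod_j z_j^{a_{ij}}$ is identified by citing results of Sheridan (and combinatorially when $n=0$) rather than by the direct localization disk count you sketch. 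The B-side is matched by the same uniqueness, using that the matrix-factorization deformation has the same first-order class.
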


\begin{proof}
Let
$
 \LGr(\cP)^{\ua} \to \LGr(\cP)
$
be the universal abelian cover of $\LGr(\cP)$,
whose group of deck transformations
can be identified with
$
\bG
\coloneqq
H_1(\LGr(\cP)).
$
A \emph{$\bG$-grading}
of a Lagrangian $L$ in $\cP$
is a lift $\stilde$
of the tautological section
$s_L \colon L \to \LGr(\cP)$
to $\LGr(\cP)^\ua$.
The Floer cohomology of $\bG$-graded Lagrangians is
$\bG$-graded.

The tensor square
$
 \Omega_\cP^{\otimes 2}
$
of the holomorphic volume form
\begin{align}
 \Omega_{\cP}
  \coloneqq \Res \frac{1}{1+y_1+\cdots+y_{n+2}}
   \frac{d y_1}{y_1} \wedge \cdots \wedge \frac{d y_{n+2}}{y_{n+2}}
\end{align}
induces a splitting of the exact sequence
\begin{align}
 0 \to H_1(\LGr(T_p \cP)) \to H_1(\LGr(\cP)) \to H_1(\cP) \to 0,
\end{align}
where
$\LGr(T \cP)$ is the Lagrangian Grassmannian bundle
of the tangent bundle $T \cP$,
and
$p \in \cP$
is
an arbitrarily chosen base point.
We identify
$
\bG
\coloneqq
H_1(\LGr(\cP))
$
with
$
H_1(\cP) \oplus H_1(\LGr(T_p \cP))
\cong M \oplus \bZ
$
by this splitting.
Similarly,
we identify
$
\Deck \lb \LGr(\cP)^{\ua} \to \LGr \lb \cPt \rb \rb
$
with $\Mt \oplus \bZ$
using the tensor square
(of the restriction to $\cPt$)
of $\Omega_U$.

The cohomology algebra $A$
of the endomorphism $A_\infty$-algebra $\cA$
of the immersed Lagrangian sphere $\LSS$
in the $\bG$-graded Fukaya category of $\cP$
is computed
in \cite{MR2863919}
as the exterior algebra
generated by elements
$\theta_0, \ldots, \theta_{n+2}$
of degrees
\begin{align} \label{eq:deg theta_i}
 \deg \theta_i =
\begin{cases}
- (\bfe_1 + \cdots + \bfe_{n+2}) - 1 & i = 0, \\
1 + \bfe_i & i =1,\ldots,n+2.
\end{cases}
\end{align}

The Kontsevich formality
lifts the Hochschild--Kostant--Rosenberg isomorphism
to an $L_\infty$-quasi-isomorphism
\begin{align} \label{eq:hkr1}
 \PhiHKR \colon \CC^\bullet(A)
  \to \bC[z_0,\ldots,z_{n+2}][\theta_0,\ldots,\theta_{n+2}]
\end{align}
from the Hochschild cochain complex of $A$
to the graded Lie algebra
of polyvector fields on
$\bC[z_0,\ldots,z_{n+2}]$.
The latter is a formal $L_\infty$-algebra,
whose underlying graded vector space
is the the free graded commutative algebra
generated by even variables
$z_i$
of degrees
\begin{align} \label{eq:deg z_i}
 \deg z_i = - \deg \theta_i + 1
\end{align}
and odd variables $\theta_i$
of degrees \pref{eq:deg theta_i}.
The non-trivial $L_\infty$-operation $\frakl_2$
is identified
with the Schouten bracket
by sending $\theta_i$ to $\frac{\partial}{\partial z_i}$.
As explained in \cite[Lemma 3.3]{MR4243021},
the Maurer-Cartan element $\mu^{\ge 3}$
describing the deformation of $A$ to $\cA$
is sent to
\begin{align}
\bfV_0 \coloneqq - z_0 \cdots z_{n+2}
\end{align}
by $\PhiHKR$.
As shown in \cite[Lemma 3.5]{MR4243021},
this implies that the Hochschild cohomology of $\cA$
is isomorphic to the quotient
\begin{align}
  \bC[z_0,\ldots,z_{n+2}][u_0,\ldots,u_{n+2}]/\cJ
\end{align}
of the free graded commutative algebra
generated by even variables $z_i$ of degrees \pref{eq:deg z_i}
and odd variables $u_i$ of degrees $1$
by the ideal
\begin{align}
 \cJ = \lb \prod_{i \nin I} z_i \prod_{i \in I} u_i \rb_{I \subset \{ 0, \ldots, n+2 \}}.
\end{align}

Let $\cAt$ be the endomorphism $A_\infty$-algebra
of $\LSSt$
in the $\bGt$-graded Fukaya category of $\cPt$.
It is obtained from $\cA$ as follows:
\begin{itemize}
\item
Take a complete set
$
\{ m_g \}_{g \in \Mbar}
\subset M
\subset \bG \cong M \oplus \bZ
$
of representatives of $\Mbar \cong M/\Mt$.
\item
Take the $\bG$-graded endomorphism $A_\infty$-algebra
of the direct sum
$
\bigoplus_{g \in \Mbar} \cA(m_g)
$
of shifted free modules
in $\module \cA$.
\item
Take the $A_\infty$-subalgebra
consisting of homogeneous elements whose degrees are in $\bGt$.
\end{itemize}

Let
$
\cAt_R
$
be the $\bGt$-graded deformation of
$
\cAt
$
over the polynomial ring
\begin{align}
R \coloneqq \bC \ld r_1,\ldots,r_{n+2} \rd
\end{align}
whose $A_\infty$-operations are given by
counting pseudo-holomorphic disks in $U$
weighted by intersection numbers with $E_i$.
The degree
\begin{align}
 \deg r_i = \bff_i + 2 = \sum_{j=1}^{n+2} a_{ij} \bfe_j + 2
\end{align}
of the variable counting intersection numbers with $E_i$
is defined by first choosing a small disk in $U$
intersecting simply and transversely to $E_i$
and disjoint from all the other $E_j$'s,
lifting it to $\LGr(U)$,
and taking the class of its boundary
in $\bGt$.
Let
$
\frakm = (r_1, \ldots, r_{n+2})
$
be the maximal ideal of
$
R
$
at the origin.
The first order deformation class of
$
\cAt_R
$
belongs to the $\Mbar$-invariant degree 2 part
$
\HH^2 \lb \cAt, \cAt \otimes \frakm/\frakm^2 \rb^\Mbar
$
of the $\bGt$-graded Hochschild cohomology
of the $\cAt$-bimodule
$\cAt \otimes \frakm/\frakm^2$,
which is isomorphic to the degree 2 part
$
\HH^2 \lb \cA, \cA \otimes \frakm/\frakm^2 \rb
$
of the $\bG$-graded Hochschild cohomology
of the $\cA$-bimodule
$\cA \otimes \frakm/\frakm^2$
by \cite[Remark 2.66]{MR3294958}.

One has
\begin{align}
 &\deg \lb \prod_{i=1}^{n+2} r_i^{b_i} \prod_{i=0}^{n+2} z_i^{c_i} \prod_{i \in I} u_i \rb \\
 &\quad = \sum_{i=1}^{n+2} b_i \lb \sum_{j=1}^{n+2} a_{ij} \bfe_j + 2 \rb
  - \sum_{i=1}^{n+2} c_i \bfe_i
  + c_0 (\bfe_1 + \cdots + \bfe_{n+2} + 2)
  + |I| \\
 &\quad = \sum_{i=1}^{n+2} \lb \sum_{j=1}^{n+2} a_{ji} b_j - c_i + c_0 \rb \bfe_i
  + \sum_{i=1}^{n+2} 2 b_i
  + 2 c_0
  + |I|.
\end{align}
For this to be 2, one needs
\begin{align}
 c_i = \sum_{j=1}^{n+2} a_{ji} b_j + c_0
\end{align}
for $i =1,\ldots,n+2$ and
\begin{align}
 2 \sum_{i=1}^{n+2} b_i + 2 c_0 + |I| = 2.
\end{align}
This is the case
if and only if one of $b_1, \ldots, b_n$, $c_0$, or $|I|/2$ is $1$
and others are $0$.
If $b_j = \delta_{ij}$ for $i \in \{ 1,\ldots,n+2 \}$,
then one has
\begin{align}
 c_j = \sum_{k=1}^{n+2} a_{kj} \delta_{ki} = a_{ij}
\end{align}
and
\begin{align} \label{eq:first ord def classes}
 r_i \prod_{j=1}^{n+2} z_j^{c_j}
  = r_i \prod_{j=1}^{n+2} z_j^{a_{ij}}.
\end{align}
This argument also shows
$
 \HH^2(\cA, \cA \otimes \frakm^i) = 0
$
for $i \ge 2$.

The first order deformation class of $\cAt_R$
is the sum
\begin{align} \label{eq:first ord def class}
\sum_{i=1}^{n+2} r_{i} \prod_{j=1}^{n+2} z_j^{a_{ij}}
\end{align}
of \pref{eq:first ord def classes}
for all $i \in \{ 1, \ldots, n+2 \}$.
If $n > 0$,
this follows from \cite[Lemma 4.22]{MR4153652}
and \cite[Assumption 5.3]{MR4153652}.\footnote{
Although the proof of \cite[Assumption 5.3]{MR4153652} is relegated to future work,
there is no difficulty in the present setting,
where $U$ is exact.}
If $n=0$,
then one has either
\begin{enumerate}
\item
Brieskorn--Pham: $\bfW = \bfWv = x^p + y^q$,
\item
chain: $\bfW = x^p y + y^q$ and $\bfWv = x^p + x y^q$, or
\item
loop: $\bfW = \bfWv = x^p y + x y^q$.
\end{enumerate}
In the Brieskorn--Pham case,
\cite[Lemma 4.22]{MR4153652}
and \cite[Assumption 5.3]{MR4153652}
shows that
the first order deformation class of $\cAt_R$
is given by \eqref{eq:first ord def class}
just as in the case $n > 0$.
In the chain case,
since the divisor in $\Uv$ defined by $x=0$ is empty,
\cite[Lemma 4.22]{MR4153652}
and \cite[Assumption 5.3]{MR4153652}
shows that
the first order deformation class of $\cAt_R$
is given by
$
r_2 y^q
$,
which is equivalent to \pref{eq:first ord def class}
since
\begin{align}
r_1 x^p y + r_2 y^q - x y z = r_2 y^q - x y (z - r_1 x^{p-1}),
\end{align}
so that the term $r_1 x^p y$ can be absorbed
into a coordinate change of $z$.
Similarly,
in the loop case,
both of the divisors in $\Uv$ defined by $x=0$ and $y=0$ are empty,
and one has
\begin{align}
r_1 x^p y + r_2 x y^q - x y z = x y (z - r_1 x^{p-1} - r_2 y^{q-1}),
\end{align}
so that both of the terms $r_1 x^p y$ and $r_2 x y^q$ can be absorbed
into a coordinate change of $z$.

Now $\cAt_R$ and hence
\begin{align}
\cAt_1 \coloneqq \cAt_R \otimes_R R/(r_i-1)_{i=1}^n
\end{align}
are determined uniquely up to quasi-isomorphism
by \cite[Proposition 6.6]{MR3578916},
so that
\begin{align}
\fukzero U \simeq \module \cAt_1.
\end{align}

Similarly,
$
\mf_0 \lb \bfk[z_0,\ldots,z_{n+2}], \bfV_0 \rb
$
admits a $\bG$-grading by \pref{eq:deg z_i},
and
\begin{align}
\mf_0 \lb R[z_0,\ldots,z_{n+2}],
\bfV_R \coloneqq \bfV_0 + \sum_{i=1}^{n+2} r_{i} \prod_{j=1}^{n+2} z_j^{a_{ij}}
\rb
\end{align}
is a $\bGt$-graded deformation
whose first order deformation class
is \pref{eq:first ord def class}
by \cite[Proposition 7.1]{MR3294958}.
It follows that
\begin{align}
\emfzero{\bA^{n+2}}{K}{\bfV}
\simeq
\module \cAt_1,
\end{align}
and
\pref{pr:fuk_0=mf_0} is proved.
\end{proof}

Now we specialize to Brieskorn--Pham singularities
where $a_{ij} = p_i \delta_{ij}$ and $p_i > 2$
for $i \in \{ 1, \ldots, n+2 \}$.

\begin{proposition} \label{pr:fuk_0 U=fuk U}
One has an equivalence
\begin{align}
  \fukzero \Uv \simeq \fuk \Uv.
\end{align}
\end{proposition}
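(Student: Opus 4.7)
The plan is to show that the lifts $\LSSt$ of the Seidel--Sheridan Lagrangian split-generate the compact Fukaya category $\fuk{\Uv}$. The natural approach is via Abouzaid's generation criterion for compact Lagrangians in a Liouville manifold: a finite collection of compact Lagrangians split-generates $\fuk{\Uv}$ provided the open--closed map from $\HH_*(\fukzero{\Uv})$ to $\mathrm{SH}^*(\Uv)$ hits the unit $1 \in H^0(\Uv)$, viewed inside $\mathrm{SH}^*(\Uv)$ via the canonical map.

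Using \pref{pr:fuk_0=mf_0} to identify $\HH_*(\fukzero{\Uv})$ with the Hochschild homology of $\emfzero{\bA^{n+2}}{K}{\bfV}$, one computes the latter via an HKR-type theorem (after Dyckerhoff--Murfet) in terms of the equivariant Jacobian ring of $\bfV = \bfW - z_0 \cdots z_{n+2}$ at the origin, where $\bfV$ has an isolated critical point. The distinguished ``trace'' element on the matrix factorization side, corresponding to the structure sheaf of the origin, should map under the open--closed map to the unit in $H^*(\Uv)$, which concludes the argument once this identification is made explicit.

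The main obstacle is the compatibility of the equivalence of \pref{pr:fuk_0=mf_0} with the Calabi--Yau/cyclic structures on both sides: the proof of \pref{pr:fuk_0=mf_0} proceeds by deformation theory, which controls the underlying $A_\infty$-operations but not directly the trace structure needed to compute the open--closed map. An alternative geometric route, specific to the Brieskorn--Pham case, is to project $\Uv$ to one coordinate and obtain a Lefschetz fibration over $\bC$ whose critical fibers are lower-dimensional Brieskorn--Pham Milnor fibers; by Seidel's theorem, matching spheres for paths between the $p_{n+2}$-th roots of unity split-generate $\fuk{\Uv}$. An induction on the number of variables, together with matching-path constructions that express such matching spheres as iterated cones of lifts of lower-dimensional Seidel--Sheridan Lagrangians, then reduces the statement to the base cases $n = 0, 1$, which can be handled directly via the combinatorial (Brieskorn--Pham, chain, or loop) classification used at the end of the proof of \pref{pr:fuk_0=mf_0}.
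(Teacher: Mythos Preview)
Your proposal outlines two approaches but neither is carried through. The first, via Abouzaid's generation criterion, you correctly flag as problematic: the deformation-theoretic equivalence of \pref{pr:fuk_0=mf_0} gives no control over the open--closed map, so hitting the unit cannot be verified along these lines.

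The second approach, via the Lefschetz fibration $\varpi \colon \Uv \to \bC$ and induction on $n$, is closer in spirit to the paper but has a gap at the inductive step. You assert that matching spheres in $\Uv$ can be expressed as iterated cones of ``lifts of lower-dimensional Seidel--Sheridan Lagrangians,'' but those lower-dimensional objects live in the fiber $\varpi^{-1}(*)$, not in $\Uv$, and you give no mechanism linking them to the components of $\LSSt$ inside $\Uv$. The Seidel--Sheridan immersion in the $n$-dimensional pair of pants is not built inductively from lower-dimensional ones via matching paths, so the induction does not close without substantial additional work that your sketch does not supply.

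The paper's argument avoids both obstacles by a direct identification rather than induction or an open--closed computation. It exhibits explicit non-compact Lagrangians $(L_{\bsi})_{\bsi \in I}$ in $\wfuk{\Uv}$---components of the preimage of a real locus in $\cP$, which are simultaneously Lefschetz thimbles for $\varpi$---that are Koszul dual to a distinguished basis $(S_{\bsi})_{\bsi \in I}$ of vanishing cycles. The key geometric input is that $\LSS$ meets this real locus in exactly one point, so one component $\LSSt_{\bszero}$ of $\LSSt$ satisfies $\#(\LSSt_{\bszero} \cap L_{\bsi}) = \delta_{\bsi,\bszero}$, which forces $\LSSt_{\bszero} \simeq S_{\bszero}$ in $\fuk{\Uv}$. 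The remaining vanishing cycles $V_{\bsi}$ are images of $V_{\bszero} = S_{\bszero}$ under the action of $\prod_k \bmu_{p_k}$ on $\Uv$, an action that also permutes the components of $\LSSt$; hence every vanishing cycle already lies in $\fukzero{\Uv}$.
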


\begin{proof}
Let
\begin{align}
\varpi \colon \Uv \to \bC, \qquad
(x_k)_{k=1}^{n+2} \mapsto x_{n+2}
\end{align}
be the projection to the last coordinate.
The fiber
\begin{align}
\varpi^{-1} (x_{n+2})
= \lc
(x_1, \ldots, x_n) \in \bC^n
\relmid
x_1^{p_1} + \cdots + x_n^{p_n} + (x_{n+2}^{p_{n+2}} + 1) = 0
\rc
\end{align}
is a Milnor fiber of
a lower-dimensional Brieskorn--Pham singularity,
unless $x_{n+2}$ belongs to the set
\begin{align}
\Critv \varpi
=
\lc \zeta_k \coloneqq \exp \lb 2 (k+1/2) \pi \sqrt{-1} \middle/ p_{n+2} \rb \rc_{k=0}^{p_{n+2}-1}
\end{align}
of critical values of $\varpi$,
in which case the fiber has
the lower-dimensional Brieskorn--Pham singularity at the origin.

Set
\begin{align}
I \coloneqq
\lc \bsi = (i_1, \ldots, i_{n+2}) \in \bZ^{n+2} \relmid
0 \le i_k \le p_k-2 \text{ for any } 1 \le k \le n+2 \rc.
\end{align}
A distinguished basis
$
\lb V_{\bsi} \rb_{\bsi \in I}
$
of vanishing cycles
is described inductively
in \cite{MR2803848}
as a fibration of lower-dimensional vanishing cycles
above the matching path on the $x_{n+2}$-plane
connecting
$\zeta_{i_{n+2}}$ and $\zeta_{i_{n+2}+1}$.

There is another distinguished basis
$
(S_{\bsi})_{\bsi \in I}
$
of vanishing cycles,
which can similarly be constructed inductively
as a fibration of lower-dimensional vanishing cycles
above the matching path on the $x_{n+2}$-plane
connecting
$\zeta_0$ and $\zeta_{i_{n+2}+1}$.
Note that $S_\bsi=V_\bsi$
for $\bsi = \bszero \coloneqq (0, \ldots, 0)$.
The collection
$
(V_\bsi)_{\bsi \in I}
$
in the Fukaya--Seidel category
of the Brieskorn--Pham polynomial
corresponds to simple modules
of the tensor product
of the path algebras of $A_{p_k-1}$-quivers,
whereas
the collection
$
(S_\bsi)_{\bsi \in I}
$
corresponds to projective modules.

The collection
$
(S_\bsi)_{\bsi \in I}
$
of objects in $\fuk \Uv$
has a Koszul dual collection
$
(L_\bsi)_{\bsi \in I}
$
of objects in $\wfuk \Uv$,
which are connected components
of the inverse image of
\begin{align} \label{eq:positive real locus}
\lc (x_1, \ldots, x_{n+2}) \in \cP \cap \bR^{n+2} \relmid
x_i > 0 \text{ for } 1 \le i \le n \text{ and }
x_{n+2} < -1 \rc
\end{align}
by the covering map
$
\varphi_{\bCx} \colon \Uv \to \cP.
$
The Lagrangian $L_\bsi$ is a Lefschetz thimble
for $\varpi$
over the half line on the $x_{n+2}$-planes
from $\zeta_{i_{n+2}+1}$ to infinity
associated with the $(n-1)$-dimensional vanishing cycle $V_\bsibar$
where $\bsibar = (i_k)_{k=1}^n$.

Since $\LSS$ intersects \pref{eq:positive real locus}
only at one point \cite[Corollary 2.9]{MR2863919},
there is an irreducible component $\LSSt_\bszero$ of $\LSSt$
such that
$
\LSSt_\bszero \cap L_\bsi
$
consists of one point
if $\bsi = \bszero$,
and is empty otherwise.
It follows that $\LSSt_\bszero \simeq S_\bszero$ in $\fuk \Uv$.
The vanishing cycle $V_\bsi$ for other $\bsi$
is the image of $V_\bszero = S_\bszero$
by the map
$
(x_k)_{k=1}^{n+2} \mapsto
\lb \exp \lb 2 i_k \pi \sqrt{-1} \middle/ p_k \rb x_k \rb_{k=1}^{n+2}.
$
Since vanishing cycles generate $\fuk \Uv$,
\pref{pr:fuk_0 U=fuk U} is proved.
\end{proof}

The closed immersion
\begin{align}
  \iota \colon 
  \ld \bfW^{-1}(0) \middle/ K \rd
  \to
  \ld \bfV^{-1}(0) \middle/ K \rd
\end{align}
induces a push-forward
\begin{align}
  \iota_* \colon
  \scoh \ld \bfW^{-1}(0) \middle/ K \rd
  \to
  \scoh \ld \bfV^{-1}(0) \middle/ K \rd
\end{align}
since the push-forward of the structure sheaf of $\bfW^{-1}(0)$
is defined by the non-zero-divisor $x_0$
and hence has projective dimension one.
The adjunction with the pull-back
\begin{align}
  \iota^* \colon 
  \scoh \ld \bfW^{-1}(0) \middle/ K \rd
  \to
  \scoh \ld \bfV^{-1}(0) \middle/ K \rd
\end{align}
follows from that for coherent sheaves.
Let
\begin{align}
\cL_\bsi \coloneqq \cO_{\bA^1 \times \bszero} \lb -i_1 \chi_1 - \cdots - i_{n+2} \chi_{n+2} \rb
\end{align}
be the structure sheaf of the closed subscheme
$\bA^1 \times \bszero$
of $\bfV^{-1}(0)$
defined by $x_1=\cdots=x_{n+2}=0$,
twisted by an element of
$
\Khat \coloneqq \Hom(K, \Gm)
$
which is generated by
\begin{align}
\chi_i \colon K \to \Gm, \quad (\alpha_i)_{i=0}^{n+2} \mapsto \alpha_i^{-1}
\end{align}
with relations
\begin{align}
p_1 \chi_1 = \cdots = p_{n+2} \chi_{n+2} = \chi_0 + \cdots + \chi_{n+2}.
\end{align}
Then the sequence
$
\lb \cE_\bsi \coloneqq \iota^* \cL_\bsi \rb_{\bsi \in I}
$
is the full strong exceptional collection
given in \cite{MR2803848}.
Let
$
\lb \cF_\bsi \rb_{\bsi \in I}
$
be the full exceptional collection
right dual to
$
\lb \cE_\bsi \rb_{\bsi \in I}
$
so that
\begin{align}
\dim \hom \lb \cE_\bsi, \cE_{\bsj} \rb = \delta_{\bsi, \bsj},
\end{align}
and set
\begin{align}
  \cS_\bsi \coloneqq \iota_* \cF_\bsi.
\end{align}
Then
$
\lb \cL_\bsi \rb_{\bsi \in I}
$
and
$
\lb \cS_\bsi \rb_{\bsi \in I}
$
are Koszul dual;
\begin{align} \label{eq:Koszul duality}
  \dim \hom(\cL_\bsi, \cS_{\bsj})
  &\cong \dim \hom(\cE_\bsi, \cF_{\bsj})
  \simeq \delta_{\bsi, \bsj}.
\end{align}

\begin{proposition} \label{pr:duality for mf}
If $\bfwv$ is not of Calabi--Yau type,
then one has an equivalence
\begin{align}
\Funex \lb \emfzero{\bA^{n+3}}{K}{\bfV}, \perf \bfk \rb
\simeq \emf{\bA^{n+3}}{K}{\bfV}
\end{align}
of $\infty$-categories.
\end{proposition}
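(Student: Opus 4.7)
The plan is to realize the stated equivalence as an instance of Koszul duality, parallel to the argument used in the proof of \pref{eq:sfuk=C}. Set $L \coloneqq \bigoplus_{\bsi \in I} \cL_\bsi$ and $S \coloneqq \bigoplus_{\bsi \in I} \cS_\bsi$, and let $\Lambda$ and $\Lambda^!$ denote their respective endomorphism $A_\infty$-algebras.

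First, I would establish two generation statements: that $L$ split-generates $\emf{\bA^{n+3}}{K}{\bfV}$, and that $S$ split-generates $\emfzero{\bA^{n+3}}{K}{\bfV}$. The latter follows from the fact that $(\cF_\bsi)_{\bsi \in I}$ is the full exceptional collection dual to $(\cE_\bsi)_{\bsi \in I}$ on $[\bfW^{-1}(0)/K]$ from \cite{MR2803848}, combined with the identification $\scoh_E[\bfV^{-1}(0)/K] \simeq \scoh_0[\bfV^{-1}(0)/K]$ analogous to \pref{eq:scoh_0}, which holds because by \pref{lm:sing U} the singular locus of $\bfV^{-1}(0)$ meets $E = \{x_0 = 0\}$ only at the origin. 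For the full category I would use the localization sequence
\begin{align*}
\scoh_0 [\bfV^{-1}(0)/K] \to \scoh [\bfV^{-1}(0)/K] \to \scoh D
\end{align*}
and match the image of $L$ in $\scoh D$ against a split-generating set of $\scoh D \simeq \emf{\bA^n}{G}{\bfw}$ furnished by the Kn\"orrer reduction used in the proof of \pref{th:double suspension 2}.

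Next, I would invoke the Koszul pairing \pref{eq:Koszul duality}: $\dim \hom(\cL_\bsi, \cS_\bsj) = \delta_{\bsi, \bsj}$ concentrated in a single cohomological degree. Together with the generation claims, this identifies $\Lambda$ and $\Lambda^!$ as a Koszul dual pair of $A_\infty$-algebras whose mutually dual collections of simples are indexed by $I$. The non-Calabi--Yau hypothesis on $\bfwv$ guarantees properness of $\Lambda^!$; indeed, in the Calabi--Yau case the Serre functor of $\emfzero{\bA^{n+3}}{K}{\bfV}$ would permute the simples $\cS_\bsi$ up to shift and produce infinite self-Ext towers, contradicting properness. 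Properness combined with Koszul duality then yields
\begin{align*}
\Funex \lb \emfzero{\bA^{n+3}}{K}{\bfV}, \perf \bfk \rb
\simeq \pseu \Lambda^!
\simeq \perf \Lambda
\simeq \emf{\bA^{n+3}}{K}{\bfV}
\end{align*}
by standard Koszul duality theorems of the kind invoked in the proof of \pref{eq:sfuk=C}, such as \cite{MR4532824}.

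The main technical obstacle is the generation claim for $L$. The objects $\cL_\bsi$ are supported on the whole $x_0$-axis, which coincides with the entire singular locus, so one cannot reduce to a purely local argument at the origin in the way one does for $S$. The Kn\"orrer reduction on $\scoh D$ must be carried out in a manner that tracks how the $\cL_\bsi$ map under the quotient $\scoh [\bfV^{-1}(0)/K] \to \scoh D$, compatibly with the exceptional collection description on the target; verifying this compatibility, and thereby confirming that the $|I|$ objects $\cL_\bsi$ actually suffice to split-generate the full matrix factorization category rather than merely a proper subcategory, is the crux of the argument.
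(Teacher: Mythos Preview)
Your route and the paper's diverge at the crucial step, and the paper's is both shorter and avoids the obstacle you yourself flag. The paper does \emph{not} try to show that the collection $(\cL_\bsi)_{\bsi\in I}$ split-generates $\emf{\bA^{n+3}}{K}{\bfV}$. Instead, it observes that there is a one-parameter subgroup $\Gm\subset K$ acting by dilation on the critical locus $\bA^1\times\bszero$ of $\bfV$ with the origin as its unique fixed point; this forces the right orthogonal to $(\cS_\bsi)_{\bsi\in I}$ in the \emph{whole} category $\emf{\bA^{n+3}}{K}{\bfV}$ to vanish. Together with the Koszul pairing \pref{eq:Koszul duality} this yields the equivalence directly. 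The non--Calabi--Yau hypothesis enters exactly here: the weight of $x_0$ under this $\Gm$ is $d_0=h-\sum d_i$, so the action is genuinely dilating (and the fixed locus reduces to the origin) precisely when $d_0\neq 0$. Your explanation of the hypothesis via ``infinite self-Ext towers'' is not the operative mechanism.

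Your proposed generation argument for $L$ also has a gap as written. Showing that the images of the $\cL_\bsi$ split-generate the quotient $\scoh D$ via Kn\"orrer is not enough: you must also produce the kernel $\scoh_0 U$ from the $\cL_\bsi$. The obvious attempt---take cones of $x_0\colon\cL_\bsi(-\chi_0)\to\cL_\bsi$ to land in $\scoh_0 U$---requires the $\chi_0$-twists $\cL_\bsi(-\chi_0)$, and you have not argued that these lie in the thick subcategory generated by the finitely many $\cL_\bsi$ with $\bsi\in I$. This is exactly the kind of bookkeeping the dilating-action argument sidesteps.
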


\begin{proof}
This follows from \pref{eq:Koszul duality}
and the fact that the right orthogonal to
$
\lb \cS_\bsi \rb_{\bsi \in I}
$
is zero since $\Gm \subset K$ is a dilating action
on the critical locus $\bA^1 \times \bszero$ of $\bfV$
with the origin as the unique fixed point.
\end{proof}

\pref{pr:duality for mf}
combined with
\cite[Theorem 6.11]{MR4442683}
proves
\cite[Conjecture 1.4]{MR4442683}
for non-log Calabi--Yau Brieskorn--Pham singularity.

\section{Rabinowitz Floer homology from mirror symmetry} \label{sc:RFH}

\subsection{}

We use the same notations as in \pref{sc:invertible}.
We regard $x_0$ as a section of the line bundle $\cL$ on $U$
associated with the character $\chi_0$ of $K$,
which in turn gives a natural transformation $s$
from the autoequivalence
$
\cL^{\vee} \otimes(-)
\simeq
(-\chi_0)
$
on
$
\emf{\bA^{n+3}}{K}{\bfW - x_0 \cdots x_{n+2}} \simeq \scoh U
$
to the identity functor.
The localization of $\scoh U$ along $s$ gives
$\scoh D \simeq \scoh U \setminus E$
since $E$ is defined by $x_0=0$
(cf.~\cite[Section 1]{MR2426130}).

Let $(d_1,\ldots,d_{n+2},h)$ be the sequence of positive integers
such that
$\gcd(d_1,\ldots,d_{n+2},h) = 1$ and
\begin{align}
\bfW(t^{d_1} x_1,\ldots,t^{d_{n+2}} x_{n+2})
= t^{h} \bfW(x_1,\ldots,x_{n+2}).
\end{align}
Then $h$ is the minimal positive integer
such that $x_0^h$
is invariant under the action of $\ker \chi \subset K$,
and one has
\begin{align}
\lb \cL^{\vee} \rb^{\otimes h} \otimes (-)
\simeq (- h \chi_0)
\simeq (- d_0 \chi)
\simeq [- 2 d_0]
\end{align}
where
\begin{align}
d_0 \coloneqq h - d_1 - \cdots - d_{n+2}.
\end{align}
One can regard $s^h$
as an element of $\HH^{2 d_0} \lb \scoh U \rb$,
so that
$
\scoh U
$
is linear over $\bfk[s^h]$.
The localization of $\scoh U$ along $s$
is equivalent to that along $s^h$;
\begin{align} \label{eq:scoh D from scoh U by inverting s}
\scoh D \simeq \scoh U \otimes_{\bfk[s^h]} \bfk[s^h,s^{-h}].
\end{align}
Assume \pref{cj:milnor},
so that
$
\scoh D \simeq \rfuk{\Uv}
$
by \pref{eq:stable hms}.
This implies
\begin{align}
\RFH^* \lb \Uv \rb
&\simeq \HH^* \lb \rfuk{\Uv} \rb \\
&\simeq \HH^* \lb \scoh U \otimes_{\bfk[s^h]} \bfk[s^h,s^{-h}] \rb \\
&\simeq \HH^* \lb \scoh U \rb \otimes_{\bfk[s^h]} \bfk[s^h,s^{-h}].
\end{align}

\subsection{}

Let $\bfWv$ be the transpose of $\bfW$
and $(\dv_1,\ldots,\dv_{n+2},\hv)$ be the sequence of positive integers
such that
$\gcd(\dv_1,\ldots,\dv_{n+2},\hv) = 1$ and
\begin{align}
\bfWv(t^{\dv_1} x_1,\ldots,t^{\dv_{n+2}} x_{n+2})
= t^{\hv} \bfWv(x_1,\ldots,x_{n+2}).
\end{align}
Then the link
\begin{align}
C \coloneqq \lc (x_1,\ldots,x_{n+2}) \in \bfWv^{-1}(0) \relmid
|x_1|^2+\cdots+|x_{n+2}|^2 = 1 \rc
\end{align}
of the singularity of $\bfWv^{-1}(0)$ at the origin
has an $S^1$-action defined by
\begin{align}
S^1 \ni t \colon
(x_1,\ldots,x_{n+2})
\mapsto \lb t^{\dv_1} x_1, \ldots, t^{\dv_{n+2}} x_{n+2} \rb,
\end{align}
which lifts to an $S^1$-action
\begin{align}
S^1 \ni t \colon
(x_0,\ldots,x_{n+2})
\mapsto \lb t^{\hv} x_0, t^{\dv_1} x_1, \ldots, t^{\dv_{n+2}} x_{n+2} \rb
\end{align}
on the total space
of the family
\begin{align} \label{eq:fUv}
\varphi \colon \fUv \coloneqq \lc (x_0,\ldots,x_{n+2}) \in \bC^{n+3} \relmid
\bfWv(x_1,\ldots,x_{n+2}) = x_0 \rc \to \bC^1,
\qquad (x_0,\ldots,x_{n+2}) \mapsto x_0.
\end{align}
Let $\mu$ be the endofunctor of $\wfuk{\Uv}$
defined as the clockwise monodromy of the family \pref{eq:fUv}
around the origin,
which is isomorphic to the composite
of inverse spherical twists along a distinguished basis
of vanishing cycles of $\bfWv$.
It is shown in \cite[Section 4.c]{MR1765826}
that
\begin{align}
  \mu^{\hv} \simeq \ld - 2 \dv_0 \rd
\end{align}
where
\begin{align}
\dv_0 \coloneqq \hv - \dv_1 - \cdots - \dv_{n+2}.
\end{align}
The wrapped Fukaya category
of the singular hypersurface $\bfWv^{-1}(0)$
in the sense of Auroux
(cf.~\cite[Definition 1]{MR4377932}),
defined as the localization of $\wfuk{\Uv}$
along the natural transformation
$\sv \colon \mu \to \id$
first introduced in \cite{MR2483942},
is equivalent to the quotient of $\wfuk{\Uv}$
by the split-closure
of the essential image of the cap functor
\cite[Corollary 1]{MR4377932}.
Since $\fuk{\Uv}$ is split-generated
by vanishing cycles,
which are in the essential image of the cap functor,
the wrapped Fukaya category
of the singular hypersurface $\bfWv^{-1}(0)$
is equivalent to the stable Fukaya category,
which in turn is equivalent to the Rabinowitz Fukaya category
$\rfuk{\Uv}$.

\subsection{}
Set
\begin{align}
 V \coloneqq \bfk x_0 \oplus \bfk x_1 \oplus \cdots \oplus \bfk x_{n+2}.
\end{align}
For $\gamma \in K$,
let $V_\gamma$ be the subspace of $\gamma$-invariant elements in $V$,
$S_\gamma$ be the symmetric algebra of $V_\gamma$,
$\bfV_\gamma$ be the restriction of $\bfV$ to $\Spec S_\gamma$,
and
$N_\gamma$ be the $K$-stable complement of $V_\gamma$ in $V$
so that
$V \cong V_\gamma \oplus N_\gamma$
as a $K$-module.
Then
\cite{MR2824483,MR3084707,MR3108698,MR3270588}
(cf.~also \cite[Theorem 3.1]{MR4442683})
shows that
$
 \HH^t \lb \emf{\bA^{n+2}}{K}{\bfV} \rb
$
is isomorphic to
\begin{multline} \label{eq:HHmf}
 \bigoplus_{\substack{\gamma \in \ker \chi, \ l \geq 0 \\ t - \dim N_\gamma = 2u }}
  \lb
  H^{-2l}(d \bfV_\gamma) \otimes \Lambda^{\dim N_\gamma} N_\gamma^\dual \rb_{(u+l)\chi} \\
  \oplus \bigoplus_{\substack{\gamma \in \ker \chi, \ l \geq 0 \\ t - \dim N_\gamma = 2u+1}}
  \lb
  H^{-2l-1}(d \bfV_\gamma) \otimes \Lambda^{\dim N_\gamma} N_\gamma^\vee \rb_{(u+l+1) \chi}.
\end{multline}
Here $H^i(d \bfV_\gamma)$ is the $i$-th cohomology of the Koszul complex
\begin{align} \label{eq:Koszul}
 C^*(d \bfV_\gamma) \coloneqq \lc
 \cdots
 \to \Lambda^2 V_\gamma^\dual \otimes S_\gamma (-2 \chi)
  \to V_\gamma^\dual \otimes S_\gamma (-\chi)
  \to S_\gamma \rc,
\end{align}
where the rightmost term $S_\gamma$ sits in cohomological degree 0, and
the differential is the contraction with
\begin{align}
 d \bfV_\gamma \in \lb V_\gamma \otimes S_\gamma \rb_{\chi}.
\end{align}

\subsection{} \label{sc:S^{n+1}}

As an example,
consider the case
$
\bfW = x_1^2 + \cdots + x_{n+2}^2
$,
where
$
\Uv \cong T^* S^{n+1}
$,
$
h=2
$,
$
d_0=-n
$,
and
$
\HH^* \lb \emf{\bA^{5}}{K}{\bfW} \rb \otimes_{\bfk[s^h]} \bfk[s^h,s^{-h}]
$
has a basis consisting of
\begin{itemize}
 \item
$x_0^{2m}$ for $m \in \bZ$ and $\gamma=(1,\ldots,1)$
of degree $-2mn$,
 \item
$x_0^{2m+1} \otimes x_0^\vee$ for $m \in \bZ$ and $\gamma=(1,\ldots,1)$
of degree $-2mn+1$
\end{itemize}
and,
if $n$ is even,
in addition to the above,
\begin{itemize}
 \item
$x_0^{2m} \otimes x_1^\vee \otimes \cdots x_{n+2}^\vee$ for $m \in \bZ$
and $\gamma = (1,-1,-1,\ldots,-1)$
of degree $(2m+1)n$,
 \item
$x_0^{2m+1} \otimes x_0^\vee \otimes \cdots x_{n+2}^\vee$ for $m \in \bZ$
and $\gamma = (1,-1,-1,\ldots,-1)$
for $m \in \bZ$ of
degree $(2m+1)n+1$.
\end{itemize}

\subsection{} \label{sc:T^* S^1}

Let $U_n$ be the Liouville domain $\bCx \cong T^* S^1$
equipped with the grading
determined by the quadratic differential
$x^n (d \log x)^{\otimes 2}$.
Then $\wfuk{U_n}$ is generated
by the cotangent fiber,
whose endomorphism $A_\infty$-algebra
is isomorphic to the free algebra $\bfk \la u, u^{-1} \ra$
generated by an element $u$ of degree $n$
and its inverse $u^{-1}$,
so that
\begin{align}
\wfuk{U_n}
&\simeq \module \bfk \la u, u^{-1} \ra \\
&\simeq \rfuk{T^* S^{n+1}},
\end{align}
and hence
\begin{align}
\SH^*(U_n)
&\simeq \HH^*(\bfk \la u, u^{-1} \ra) \\
&\simeq \RFH(T^* S^{n+1}).
\end{align}
Reeb orbits in the contact boundary of $U_n$
with winding number $w$
come in a family parametrized by $S^1$,
and
a Bott--Morse model
of the symplectic cohomology
gives
two generators
$p_w$ and $q_w$
of degrees $n w$ and $n w + 1$
in such a way that
the Floer differential is given by
\begin{align}
d p_w = (1 - (-1)^{n w}) q_w
\end{align}
(see \cite[Propostion 3.9]{MR2475400}).
It follows that
\begin{itemize}
\item 
if $n$ is even,
then $p_w$ and $q_w$ for all $w \in \bZ$ survives,
\item
if $n$ is odd,
then $p_w$ and $q_w$ for odd $w$ annihilates each other,
and only $p_w$ and $q_w$ for even $w$ survives,
\end{itemize}
and one can identify
\begin{align}
p_{2m}
&=
x_0^{2m}, \\
q_{2m}
&=
x_0^{2m+1} \otimes x_0^\vee, \\
p_{2m+1}
&=
x_0^m \otimes x_1^\vee \otimes \cdots \otimes x_{n+2}^\vee, \\
q_{2m+1}
&=
x_0^{2m+1} x_0^\vee \otimes x_1^\vee \otimes \cdots \otimes x_{n+2}^\vee.
\end{align}

\subsection{}

One can also compute
$
\HH^* \lb \bfk \la u, u^{-1} \ra \rb
$
in a purely algebraic way.
The enveloping algebra
$
R^{\op} \otimes R
$
of
$
R \coloneqq \bfk \la u, u^{-1} \ra
$
is isomorphic to
$
E = \bfk \la \lambda^{\pm 1}, \rho^{\pm 1} \ra/( \lambda \rho -(-1)^n \rho \lambda)
$.
The diagonal bimodule $\Delta$
regarded as a right $E$-module
is the $\bfk$-vector space $\bigoplus_{i=-\infty}^\infty \bfk u^i$
equipped with the action
\begin{align}
u^i \cdot \lambda &= (-1)^{in} u^{i+1}, \\
u^i \cdot \rho &= u^{i+1}.
\end{align}
One has
\begin{align}
\Delta \simeq \lc E[-n] \xto{(\lambda-\rho) \cdot } E \rc,
\end{align}
where the right terms sits in degree zero,
so that
the Hochschild complex of $R$ is given by
\begin{align}
\hom_E(\Delta, \Delta)
&\simeq \hom_E \lb \lc E[-n] \xto{(\lambda-\rho) \cdot} E \rc, \Delta \rb \\
&\simeq \Delta \otimes_E \lc E \xto{\cdot (\lambda-\rho)} E[n] \rc \\
&\simeq \lc \Delta \xto{\cdot (\lambda-\rho)} \Delta[n] \rc, \label{eq:alg}
\end{align}
where the left term sits in degree zero.
Since
\begin{align}
u^i \cdot (\lambda-\rho) =
\begin{cases}
  0 & i \text{ is even}, \\
  (1-(-1)^{in}) u^{i+1} & i \text{ is odd},
\end{cases}
\end{align}
the complex \pref{eq:alg} is identical
to the Floer complex appearing in \pref{sc:T^* S^1}.

\subsection{}

As another example,
consider the case
\begin{align} \label{eq:BS_k,m-k,2,2}
\bfW(x_1,x_2,x_3,x_4) = x_1^k + x_2^{m-k} + x_3^2 + x_4^2.
\end{align}
The hypersurface $\bfW^{-1}(0)$ has an isolated cDV singularity
at the origin.

\subsection{}

Recall from \cite[Theorem 1.1]{MR715649}
that a 3-fold singularity is terminal of index 1
if and only if it is an isolated cDV singularity.
It follows from \cite[Theorem 1.1]{MR3489704}
that the link of a terminal singularity
is index-positive
in the sense of \cite[Section 9.5]{MR3797062},
so that the Rabinowitz Floer cohomology is an invariant of the link
(i.e., does not depend on the symplectic filling)
(see \cite[Proposition 9.17]{MR3797062}).

\subsection{}

One can find a formal change of coordinates
transforming $\bfV(x_0,x_1,x_2,x_3,x_4)$ to $\bfW(x_1,x_2,x_3,x_4)$
to show
$
\emf{\bA^{5}}{K}{\bfV}
\simeq
\emf{\bA^{5}}{K}{\bfW}
$
just as in \pref{sc:invertible}.
Explicit computations of
\linebreak
$
\dim \HH^t \lb \emf{\bA^{5}}{K}{\bfW} \rb
$
are discussed
for simple singularities in
\cite[Section 5]{MR4371540},
for more general cases in \cite[Section 3.1]{MR4648096},
and for all of \pref{eq:BS_k,m-k,2,2} in \cite[Theorem C]{2404.17301}.

\subsection{}

One has
$
\HH^t \lb \emf{\bA^{5}}{K}{\bfW} \rb = 0
$
for $t > 3$,
and a basis of
$
\HH^3 \lb \emf{\bA^{5}}{K}{\bfW} \rb
$
consists of
\begin{align}
x_0^{\vee} x_1^{\vee} x_2^{\vee} x_3^{\vee} x_4^{\vee}
\in H^0(C^*(d \bfW_\gamma)) \otimes \Lambda^5 N_\gamma^{\vee}
\cong \Lambda^5 N_\gamma^{\vee}
\end{align}
in the direct summand of \pref{eq:HHmf}
such that
$
V_\gamma = 0
$
and
\begin{align}
x_0^{\vee} x_1^{\vee} x_2^{\vee} x_3^{\vee} x_4^{\vee}
\in H^{-1}(C^*(d \bfW_\gamma)) \otimes \Lambda^4 N_\gamma^{\vee}
\cong \bfk[x_0] \otimes (\bfk x_0)^\dual \otimes \Lambda^4 N_\gamma^{\vee}
\end{align}
in the direct summand of \pref{eq:HHmf}
such that
$
V_\gamma = \bfk x_0
$.
It follows that
$
\dim \HH^3\lb \emf{\bA^{5}}{K}{\bfW} \rb
$
is equal to the number
$
(m-k-1)(k-1)
$
of
\begin{align}
\gamma \in \ker \chi
= \{
(t_0,t_1,t_2,t_3,t_4) \in (\bfk^\times)^5
\mid
t_1^k = t_2^{m-k} = t_3^2 = t_4^2 = t_0 t_1 t_2 t_3 t_4 = 1
\}
\end{align}
such that
$t_i \ne 1$ for $i=1,2,3,4$,
which in turn is equal
to the Milnor number of the singularity
defined by \pref{eq:BS_k,m-k,2,2}.
One also has
$
\HH^2 \lb \emf{\bA^{5}}{K}{\bfW} \rb = 0
$
and
$
\dim \HH^{2i} \lb \emf{\bA^{5}}{K}{\bfW} \rb
=
\dim \HH^{2i+1} \lb \emf{\bA^{5}}{K}{\bfW} \rb
$
for all $i \le 0$.
A basis of
$
\HH^{-2i} \lb \emf{\bA^{5}}{K}{\bfW} \rb
$
for $i \in \bN$
can be divided into two kinds.

\subsection{}

Let
$
m_1 \colon \bN \to \bN
$
and
$
b_1 \colon \bN \to \{ 0, \ldots, k-1 \}
$
(resp.~%
$
m_2 \colon \bN \to \bN
$
and
$
b_2 \colon \bN \to \{ 0, \ldots, m-k-1 \}
$%
)
be the quotient and the remainder
by $k$ (resp.~$m-k$),
so that
\begin{align} \label{eq:b1b2m1m2_as_func_of_b0}
b_0 = b_1(b_0) + k m_1(b_0) = b_2(b_0) + (m-k) m_2(b_0)
\end{align}
for any $b_0 \in \bN$.
A basis
of the first kind
in cohomological degree $- 2i$
for $i \in \bN$
is parametrized by the set
\begin{align} \label{eq:I}
\rI_{k,m-k}^i
\coloneqq
\lc
b_0 \in \bN
\relmid
b_1(b_0) \ne k-1, \ 
b_2(b_0) \ne m-k-1, \ 
m_1(b_0)+m_2(b_0)=i
\rc
\end{align}
as
\begin{align}
\begin{cases}
x_0^{b_0} x_1^{b_1(b_0)} x_2^{b_2(b_0)} & b_0 \text{ is even and } \gamma = (1,1,1,1,1), \\
x_0^{b_0} x_1^{b_1(b_0)} x_2^{b_2(b_0)} x_3^{\vee} x_4^{\vee} & b_0 \text{ is odd and } \gamma = (1,1,1,-1,-1).
\end{cases}
\end{align}

\subsection{}

A basis of the second kind
in cohomological degree $- 2i$
is parametrized by the product set of
\begin{align} \label{eq:II}
\rII_{k,m-k}^i
\coloneqq
\lc
b_0 \in \bN
\relmid
\exists n_1, n_2 \in \bN,
n_1+n_2-1=i \text{ and }
b_0 = -1 + k n_1 = -1 + (m-k) n_2
\rc
\end{align}
and
\begin{align}
(\bmu_k \times \bmu_{m-k}) \setminus \{ 1 \}
=
\lc
\xi \in \bfk
\relmid
\xi \ne 1 \text{ and }
\xi^k = \xi^{m-k} = 1
\rc
\end{align}
as
\begin{align}
\begin{cases}
x_0^{b_0} x_1^{\vee} x_2^{\vee} & b_0 \text{ is even and } \gamma = (1,\xi,\xi^{-1},1,1),\\
x_0^{b_0} x_1^{\vee} x_2^{\vee} x_3^{\vee} x_4^{\vee} & b_0 \text{ is odd and } \gamma = (1,\xi,\xi^{-1},-1,-1).
\end{cases}
\end{align}

\subsection{}

\pref{tb:dim HH}
shows
$
\dim \HH^t ( \operatorname{mf} ([\bA^{5}/K], \bfW) )
$
for $3 \ge t \ge -15$
for small $(m,k)$
to give an idea about the outcome of this computation.

\begin{table}[htbp]
\begin{align*}
\begin{array}{*{20}{c}}
\toprule
(m,k) &3 & 2 & 1 & 0 & -1 & -2 & -3 & -4 & -5 & -6 & -7 & -8 & -9 & -10 & -11 & -12 & -13 & -14 & -15 \\
\midrule
(4,2)& 1& 0 & 1 & 1 & 1 & 1 & 1 & 1 & 1 & 1 & 1 & 1 & 1&1 &1 &1 &1 &1 &1 \\
(5,2)& 2 & 0 & 1 &1 &0 &0 &0 &0 &1 &1 &0 &0 &1 &1 &0 &0 &0 &0 &1 \\
(6,2)&3 & 0 &1 &1 &1 & 1& 1&1 &1 &1 &1 &1 &1 &1 &1 &1 &1 &1 &1 \\
(7,2)&4 & 0 &1  &1 &1 &1 &0 &0 &0 &0 &1 &1 &1 &1 &0 &0 &1 &1 &1 \\
(6,3) &4 &0 &2 &2 &2 &2 &2 &2 &2 &2 &2 &2 &2 &2 &2 & 2&2 &2 &2 \\
(7,3) &6 &0 &2 &2 &0 &0 &1 &1 &1 &1 &0 &0 &2 &2 &0 &0 &2 &2 &0 \\
(8,3) & 8 &0 &2 &2 &1 &1 &0 &0 &2 &2 &0 &0 &1 & 1&2 &2 &0 &0 &2 \\
(8,4) &9 &0 &3 &3 &3 &3 &3 &3 &3 &3 &3 &3 &3 &3 &3 &3 &3 &3 &3 \\
\bottomrule
\end{array}
\end{align*}
\caption{Examples of $
\dim \HH^t \lb \emf{\bA^{5}}{K}{\bfW} \rb
$}
\label{tb:dim HH}
\end{table}

\subsection{}

Define
\begin{align}
h_{k,m-k}^i
\coloneqq
\dim \HH^{-2i} \lb \emf{\bA^{5}}{K}{\bfW} \rb.
\end{align}
Set $\ell = \gcd(k,m-k)$ and
write $(k, m-k) = (e \ell, f \ell)$,
so that $\gcd(e,f)=1$.

\begin{proposition} \label{pr:recursion}
One has
\begin{align}
h_{e \ell, f \ell}^i
= \ell h_{e,f}^i + \ell - 1.
\end{align}
\end{proposition}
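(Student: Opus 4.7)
The plan is to unpack the definitions of $\rI_{k,m-k}^i$ and $\rII_{k,m-k}^i$ in the case $k = e\ell$, $m - k = f\ell$ with $\gcd(e, f) = 1$, and reduce the identity to a simple combinatorial bijection. Write $h_{e\ell,f\ell}^i = |\rI_{e\ell,f\ell}^i| + (\ell-1) |\rII_{e\ell,f\ell}^i|$ from the two-kind basis description.

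First I would analyze $\rI_{e\ell,f\ell}^i$: any $b_0 \in \bN$ decomposes uniquely as $b_0 = \ell q + r$ with $0 \le r < \ell$, and $q$ further decomposes as $q = e m_1 + s_1 = f m_2 + s_2$ with $0 \le s_1 < e$ and $0 \le s_2 < f$. A short calculation identifies $b_1(b_0) = \ell s_1 + r$, $m_1(b_0) = m_1$ (and symmetrically on the $f$-side), so the constraint $b_1 \ne e\ell - 1$ becomes ``not both $s_1 = e-1$ and $r = \ell-1$,'' while $m_1(b_0) + m_2(b_0) = i$ becomes $m_1(q) + m_2(q) = i$. Splitting by $r$: for the $\ell - 1$ values $r \in \{0, \ldots, \ell - 2\}$ the constraints on $(s_1, s_2)$ are vacuous and each contributes $A_i := |\{q \in \bN \mid \lfloor q/e \rfloor + \lfloor q/f \rfloor = i\}|$; for $r = \ell - 1$ they reduce exactly to $s_1 \ne e - 1$ and $s_2 \ne f - 1$, contributing $|\rI_{e,f}^i|$. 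Hence $|\rI_{e\ell,f\ell}^i| = (\ell - 1) A_i + |\rI_{e,f}^i|$. The relation $b_0 + 1 = e\ell n_1 = f\ell n_2$ defining $\rII_{e\ell,f\ell}^i$, combined with $\gcd(e,f) = 1$, forces $n_1 = ft$, $n_2 = et$, and $i = (e+f)t - 1$, so this set is independent of $\ell$: $|\rII_{e\ell,f\ell}^i| = |\rII_{e,f}^i|$.

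Substituting and using $h_{e,f}^i = |\rI_{e,f}^i|$ (the $\ell = 1$ base case, where $\ell - 1 = 0$), the target identity collapses to the purely combinatorial claim
\[
A_i + |\rII_{e,f}^i| = |\rI_{e,f}^i| + 1 \quad \text{for all } i \in \bN.
\]
I would prove this via generating functions. Writing $\phi(q) := \lfloor q/e \rfloor + \lfloor q/f \rfloor$, the quasi-periodicity $\phi(q + ef) = \phi(q) + e + f$ together with the Chinese remainder parametrization $q \leftrightarrow (s_1, s_2) \bmod ef$ yields
\[
\sum_{i \ge 0} A_i T^i = \frac{1}{1 - T^{e+f}} \sum_{q = 0}^{ef - 1} T^{\phi(q)},
\]
and the analogous formula for $|\rI_{e,f}^i|$ restricts the sum to ``good'' $q$'s (those with $s_1(q) \ne e - 1$ and $s_2(q) \ne f - 1$). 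Since $\sum_{i} |\rII_{e,f}^i| T^i = T^{e+f-1}/(1 - T^{e+f})$, a short algebraic manipulation reduces the identity to
\[
\sum_{\substack{q \in \{0, \ldots, ef - 1\} \\ q \text{ bad}}} T^{\phi(q)} = 1 + T + T^2 + \cdots + T^{e+f-2},
\]
where ``bad'' means $s_1(q) = e - 1$ or $s_2(q) = f - 1$.

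The main obstacle, though ultimately elementary, is the key combinatorial bijection behind this last equation: $q \mapsto \phi(q)$ should send the bad $q$'s in $\{0, \ldots, ef - 1\}$ bijectively onto $\{0, 1, \ldots, e + f - 2\}$. Both sets have cardinality $e + f - 1$ (the source by inclusion--exclusion: $f + e - 1$). Since $\gcd(e, f) = 1$ forces $q + 1$ to be divisible by both $e$ and $f$ only at $q + 1 = ef$, one has $\phi(q+1) - \phi(q) \in \{0, 1\}$ on $\{0, \ldots, ef - 2\}$, with strict increase precisely when $q + 1$ is a multiple of $e$ or $f$; such $q$ are exactly the bad ones (together with the endpoint $q = ef - 1$). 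Thus $\phi$ partitions $\{0, \ldots, ef - 1\}$ into maximal constant runs, one per value in $\{0, \ldots, e + f - 2\}$, and each bad $q$ is the right endpoint of its run. The resulting bijection would complete the proof.
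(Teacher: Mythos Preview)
Your proof is correct, and the combinatorial identity $A_i + |\rII_{e,f}^i| = |\rI_{e,f}^i| + 1$ together with your bijection argument is a nice self-contained lemma.

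The paper takes a more direct route. Rather than decomposing $b_0 = \ell q + r$ and passing through the auxiliary counting function $A_i$, it splits into the two cases $i \equiv -1 \pmod{e+f}$ and $i \not\equiv -1$, and in the latter case observes that $\rI_{e,f}^i$ is a contiguous interval $\{b_0^{\min},\ldots,b_0^{\max}\}$ whose endpoints satisfy a specific residue condition; from this it reads off $\rI_{e\ell,f\ell}^i = \{\ell b_0^{\min},\ldots,\ell b_0^{\max}+2\ell-2\}$ directly and computes its length as $\ell|\rI_{e,f}^i| + \ell - 1$. The case $i \equiv -1$ is handled separately, with both $\rI$'s empty and $|\rII|=1$. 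So the paper's argument is shorter and avoids generating functions entirely, but it leans on the interval description of $\rI_{e,f}^i$ and, as written, tacitly assumes this interval is nonempty (which can fail, e.g.\ for $(e,f)=(2,3)$ and $i=1$, or whenever $e=1$ or $f=1$). Your approach is longer but uniform: the residue decomposition and the identity $A_i + |\rII_{e,f}^i| = |\rI_{e,f}^i| + 1$ work without any case analysis on $i$ and without assuming $\rI_{e,f}^i \ne \emptyset$. In fact your ``runs of $\phi$'' argument is exactly what underlies the paper's interval description, so the two proofs share the same core observation; you just package it more carefully.
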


\begin{proof}
If $i \not \equiv -1 \mod e+f$,
then one has
$\rII_{e \ell, f \ell}^i = \emptyset$
so that
\begin{align}
h_{e \ell, f \ell}^i = |\rI_{e \ell, f \ell}^i|.
\end{align}
If we write
\begin{align}
\rI_{e,f}^i
=
\{
b_0^{\min}, b_0^{\min}+1, \ldots, b_0^{\max}
\},
\end{align}
then one has
\begin{align}
\rI_{\ell e,\ell f}^i
=
\{
\ell b_0^{\min}, \ell b_0^{\min}+1, \ldots, \ell b_0^{\max} + 2 \ell - 2
\},
\end{align}
so that
\begin{align}
|\rI_{e \ell, f \ell}^i|
&= (\ell b_0^{\max} + 2 \ell - 2) - \ell b_0^{\min} + 1 \\
&= \ell (b_0^{\max} - b_0^{\min} + 1) + \ell - 1 \\
&= \ell |I_{e,f}^i| + \ell - 1.
\end{align}

If $i \equiv -1 \mod e+f$,
then
one has $\rI_{e \ell, f \ell}^i = \emptyset$,
$|\rII_{e \ell, f \ell}^i|=1$,
and
\begin{align}
h_{e \ell, f \ell}^i
= \left| \lb \bmu_{e \ell} \cap \bmu_{f \ell} \rb \setminus \{ 1 \} \right|
= \ell - 1,
\end{align}
which is equal to $\ell h_{e,f}^i + \ell - 1$
since $h_{e,f}^i=0$.
\end{proof}

\subsection{}

The following conjecture is proposed in \cite{MR4648096}
(see also \cite{2405.03475}):

\begin{conjecture}[{\cite[Conjecture 1.4]{MR4648096}}] \label{cj:Evans-Lekili}
A compound Du Val singularity
admits a small resolution
if and only if the dimension of the symplectic cohomology of its Milnor fiber
is constant in every negative cohomological degree.
Furthermore,
if this is the case,
then this dimension is equal to the number of irreducible components
of the exceptional locus of a small resolution.
\end{conjecture}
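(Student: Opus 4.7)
The plan is to verify \pref{cj:Evans-Lekili} within the class of Brieskorn--Pham threefold singularities $\bfW = x_1^k + x_2^{m-k} + x_3^2 + x_4^2$ with $k, m - k \geq 2$, where \pref{th:hms for U} together with the identification of \pref{sc:RFH} express the symplectic cohomology of $\Uv$ (equivalently, the Rabinowitz Floer cohomology, which is an invariant of the link by the index-positivity noted in \pref{sc:RFH}) as a localization of $\HH^* \lb \emf{\bA^{5}}{K}{\bfW} \rb$. Using the pairing $\dim \HH^{-2i} = \dim \HH^{-2i+1}$ for $i \geq 1$ recorded in \pref{sc:RFH}, constancy in every negative cohomological degree reduces to constancy of the integers $h_{k, m-k}^i \coloneqq \dim \HH^{-2i} \lb \emf{\bA^{5}}{K}{\bfW} \rb$ in $i \geq 1$. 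The conjecture then becomes the conjunction of (i) a geometric classification of when the singularity admits a small resolution, and (ii) an algebraic criterion for constancy of $i \mapsto h_{k, m-k}^i$, together with agreement of the constant with the number of exceptional components.

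For (i), I set $\ell \coloneqq \gcd(k, m-k)$ and write $k = e \ell$, $m - k = f \ell$ with $\gcd(e, f) = 1$. The substitution $u = x_3 + \sqrt{-1} x_4$, $v = x_3 - \sqrt{-1} x_4$ rewrites the equation as $u v + x_1^k + x_2^{m-k} = 0$, exhibiting the germ as a compound $A_{\min(k, m-k) - 1}$ singularity and factoring the right-hand side into $\ell$ distinct irreducible components $x_1^e - \zeta x_2^f$ indexed by $\zeta^\ell = -1$. Each such component is smooth at the origin if and only if $\min(e, f) = 1$, i.e., one of $k, m-k$ divides the other. By Reid's Pagoda and the Katz--Morrison classification of small resolutions of compound $A_n$ singularities $u v = \prod g_i$, a small resolution then exists precisely under this condition, and in that case its exceptional locus has exactly $\ell - 1$ irreducible components.

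For (ii), \pref{pr:recursion} reduces the computation of $h_{e \ell, f \ell}^i$ to that of $h_{e, f}^i$ with $\gcd(e, f) = 1$. In this coprime case $\bmu_e \cap \bmu_f = \lc 1 \rc$ forces $\rII_{e, f}^i = \emptyset$, so $h_{e, f}^i = \left| \rI_{e, f}^i \right|$. The Chinese Remainder Theorem parametrizes $\rI_{e, f}^i$ by pairs $(b_1, b_2) \in \lc 0, \ldots, e - 2 \rc \times \lc 0, \ldots, f - 2 \rc$ whose associated sum $m_1^{(0)} + m_2^{(0)}$ is congruent to $i$ modulo $e + f$; the resulting count sequence is eventually periodic of period $e + f$ with total mass $(e - 1)(f - 1)$. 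If $e = 1$ or $f = 1$, the parametrizing set is empty, so $h_{e, f}^i \equiv 0$ and the recursion yields $h_{e \ell, f \ell}^i \equiv \ell - 1$, matching the geometric count. If $e, f \geq 2$ (WLOG $e \leq f$), the diagonal pairs $(j, j)$ for $0 \leq j \leq e - 2$ each have sum zero and contribute $e - 1$ to the residue $0 \bmod (e + f)$ count; since the average $(e - 1)(f - 1) / (e + f)$ is strictly less than $e - 1$ (as $f - 1 < e + f$), some other residue has strictly smaller count, so $h_{e, f}^i$ and hence $h_{e \ell, f \ell}^i$ cannot be constant.

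The main obstacle I anticipate is the geometric step (i): although Reid's Pagoda and Katz--Morrison are classical, one must carefully verify that smoothness --- and not merely distinctness --- of the factors of $x_1^k + x_2^{m-k}$ is exactly the required condition, and that the exceptional count equals $\ell - 1$ in this specific normal form. The algebraic half is then a routine unpacking of \pref{sc:RFH} and \pref{pr:recursion} combined with the elementary gap estimate above. Extending the argument to all cDV singularities, as originally conjectured, would additionally require a broader classification of small resolutions covering $cD$ and $cE$ types, along with an HMS result for invertible polynomials beyond those treated in \pref{th:hms for U}.
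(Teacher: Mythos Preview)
The statement in question is a conjecture that the paper does not prove; it is quoted from \cite{MR4648096}. What the paper actually establishes is the \emph{refined} version, \pref{cj:refined_Evans-Lekili}, for the Brieskorn--Pham family \pref{eq:BS_k,m-k,2,2}: the recursion of \pref{pr:recursion} is exactly the formula $\dim \SH^i(\Uv) = \sum_j \dim \SH^i(\Uv_j) + (\ell-1)$ once one observes that $\ell-1$ successive blow-ups produce a chain of $\ell-1$ exceptional $\bP^1$'s together with $\ell$ residual $\bQ$-factorial singularities of type $x^2+y^2+z^e+w^f$. The paper stops there and does not deduce \pref{cj:Evans-Lekili} itself for this class.

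Your proposal goes further and verifies the original \pref{cj:Evans-Lekili} for the Brieskorn--Pham family. Beyond \pref{pr:recursion} you supply two ingredients the paper leaves implicit: (a) the geometric characterization that $uv = z^{e\ell}+w^{f\ell}$ admits a small resolution if and only if $\min(e,f)=1$, with $\ell-1$ exceptional curves in that case; and (b) the combinatorial nonconstancy of $i \mapsto h_{e,f}^i$ for $i\ge 1$ whenever $e,f\ge 2$ are coprime. Both are correct. For (b) your argument can be sharpened: since every admissible pair $(b_1,b_2)$ has $m_1^{(0)}+m_2^{(0)}\le e+f-2$, the sequence $h_{e,f}^i$ is \emph{exactly} periodic of period $e+f$ from $i=0$ (not merely eventually), so $h_{e,f}^{e+f-1}=0$ while $h_{e,f}^{e+f}=h_{e,f}^{0}=\min(e,f)-1\ge 1$, and the averaging estimate is unnecessary. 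One small inaccuracy: in the coprime case the set $\rII_{e,f}^i$ is not itself empty (it equals $\{efm-1\}$ when $i=(e+f)m-1$); what vanishes is its contribution to $h_{e,f}^i$, because the accompanying factor $(\bmu_e\cap\bmu_f)\setminus\{1\}$ is empty. Your conclusion $h_{e,f}^i = |\rI_{e,f}^i|$ is nonetheless correct.

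In summary, your approach is sound and strictly extends what the paper proves for this class. The paper's route via \pref{cj:refined_Evans-Lekili} is the cleaner packaging of \pref{pr:recursion}, but it needs exactly your steps (a) and (b) to recover the dichotomy asserted in \pref{cj:Evans-Lekili}. As you correctly note, neither argument addresses general cDV singularities.
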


The following refinement of \pref{cj:Evans-Lekili}
was also proposed by the authors of \cite{MR4648096}
based on unpublished calculations:

\begin{conjecture} \label{cj:refined_Evans-Lekili}
Let $Y \to X$ be a small $\bQ$-factorialization
of a compound Du Val singularity
$P \in X$.
Let further $r$ be the number of irreducible components
of the exceptional locus,
$\Uv$ be the Milnor fiber of $P \in X$,
and
$\Uv_1,\ldots,\Uv_s$ be the Milnor fibers
of the resulting $\bQ$-factorial singularities
$Q_1,\ldots, Q_s \in Y$.
Then one has
\begin{align}
\dim \SH^i \lb \Uv \rb = \sum_{j=1}^s \dim \SH^i \lb \Uv_j \rb + r
\end{align}
for any $i < 0$.
\end{conjecture}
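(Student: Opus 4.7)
The plan is to combine the homological mirror symmetry of \pref{th:hms for U} with an explicit $B$-side analysis of the small $\bQ$-factorialization. For $i < 0$, the standard relationship between $\SH^*$ and $\RFH^*$ reduces to an isomorphism $\SH^i(\Uv) \simeq \RFH^i(\Uv)$, since the ordinary cohomology of the Milnor fiber vanishes in negative degrees. Combining this with \pref{eq:Rabinowitz Floer homology} and \pref{th:hms for U} converts the conjecture into a statement about Hochschild cohomology of equivariant matrix factorizations.

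For the Brieskorn--Pham case $\bfW = x_1^k + x_2^{m-k} + x_3^2 + x_4^2$ with $\gcd(k, m-k) = \ell$ and $(k, m-k) = (e \ell, f \ell)$, the small $\bQ$-factorialization $Y \to X$ decomposes the singularity into $r = \ell$ copies of the Brieskorn--Pham cDV of type $(e, f)$, connected by a chain of $\ell - 1$ exceptional rational curves. Under this identification, the conjecture predicts
\begin{align}
\dim \SH^i \lb \Uv_{e \ell, f \ell} \rb = \ell \cdot \dim \SH^i \lb \Uv_{e, f} \rb + (\ell - 1)
\quad \text{for } i < 0,
\end{align}
which is precisely the content of \pref{pr:recursion}. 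The entries of \pref{tb:dim HH} provide numerous consistency checks.

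For general cDV singularities, one would first extend \pref{cj:milnor} beyond Brieskorn--Pham---in particular to cover $cD_n$ and $cE_6$, $cE_7$, $cE_8$ types and their generic deformations---using the deformation-theoretic framework of \pref{sc:hms}. On the $B$-side, the small $\bQ$-factorialization $Y \to X$ should induce a semi-orthogonal decomposition of the mirror matrix factorization category into pieces corresponding to the Milnor fibers of the $Q_j$, together with an exceptional contribution from each irreducible component of the exceptional locus of $Y \to X$, each such rational curve contributing exactly one dimension to every negative Hochschild cohomology degree.

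The main obstacle is the extension of \pref{cj:milnor} beyond Brieskorn--Pham, since the computation of the first-order deformation class and the Koszul duality argument both rely on features specific to invertible polynomials. A further challenge is identifying precisely the $B$-side mirror of the small $\bQ$-factorialization and rigorously computing the Hochschild contribution of each exceptional curve; this may require a Mayer--Vietoris-type argument comparing the mirror matrix factorization categories across the $\bQ$-factorialization.
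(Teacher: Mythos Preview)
The central issue is that \pref{cj:refined_Evans-Lekili} is a \emph{conjecture}; the paper does not prove it in general and does not claim to. What the paper does is verify it for the single family \pref{eq:BS_k,m-k,2,2}: the explicit blow-up in the paragraphs following the conjecture shows that $x^2+y^2+z^{e\ell}+w^{f\ell}$ acquires a chain of $\ell-1$ exceptional curves and $\ell$ residual $\bQ$-factorial singularities of type $x^2+y^2+z^e+w^f$, and then \pref{pr:recursion} gives exactly the predicted relation
\[
h_{e\ell,f\ell}^i=\ell\, h_{e,f}^i+(\ell-1).
\]
Your second paragraph reproduces this verification essentially verbatim, so for the Brieskorn--Pham case your proposal agrees with the paper.

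The remainder of your proposal, however, is not a proof but an outline of a programme, and the paper contains nothing corresponding to it. Two points deserve comment. First, the detour through Rabinowitz Floer homology is unnecessary even in the cases the paper handles: the identification $\SH^*(\Uv)\simeq\HH^*(\wfuk{\Uv})$ together with \pref{eq:wfuk} already gives $\SH^*(\Uv)\simeq\HH^*(\scoh U)\simeq\HH^*(\emf{\bA^{5}}{K}{\bfW})$ directly, which is how the $h_{k,m-k}^i$ are interpreted. Second, your third and fourth paragraphs do not constitute an argument: a semi-orthogonal decomposition of the matrix factorization category mirroring a small $\bQ$-factorialization is not established anywhere, nor is the claim that each exceptional curve contributes exactly one dimension in every negative degree; these are restatements of the conjecture rather than steps towards it. You correctly flag the extension of \pref{cj:milnor} beyond Brieskorn--Pham as an obstacle, but even granting that extension, the $B$-side decomposition you invoke is itself conjectural. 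In short, for the only case the paper actually treats your argument coincides with the paper's, and for the general statement neither you nor the paper has a proof.
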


\subsection{} \label{sc:blow-up}

The blow-up of
\begin{align}
  \Spec \bfk[x,y,z,w]/(x y - f(z,w) g(z,w))
\end{align}
along the ideal
$(x,f(z,w))$
is defined by
$
x v = f(z,w) u
$
and
$
y u = g(z,w) v
$
inside
$
\bA^4_{x,y,z,w} \times \bP^1_{u:v}
$,
which is the union of
\begin{align}
  \Spec \bfk[x,z,w,v] / (x v - f(z,w))
\end{align}
and
\begin{align}
  \Spec \bfk[y,z,w,u]/(y u - g(z,w)).
\end{align}

\subsection{}
Now we discuss \pref{cj:refined_Evans-Lekili}
for the case
when $X$ is defined by
$\bfWv$ given by \pref{eq:BS_k,m-k,2,2}
(and hence the transpose $\bfW$ of $\bfWv$ is also given by \pref{eq:BS_k,m-k,2,2}).
By starting from the 3-fold
\begin{align}
X = \left. \Spec \bfk[x,y,z,w] \middle/ \lb x^2 + y^2 + z^{e \ell} + w^{f \ell} \rb \right.
\cong
\left. \Spec \bfk[x,y,z,w] \middle/ \lb x y - \prod_{i=0}^{\ell-1} \lb z^e - \zeta_{\ell}^i w^f \rb \rb \right.
\end{align}
where
$
\zeta_\ell \coloneqq \exp \lb 2 \pi \sqrt{-1}/\ell \rb
$
and performing the blown-up in \pref{sc:blow-up} $\ell-1$ times,
one obtains a chain of $\ell-1$ exceptional $\bP^1$'s and
$\ell$ copies of $\bQ$-factorial compound Du Val singularities
isomorphic to $x^2+y^2+z^e+w^f$.
It follows that one has $r = \ell - 1$, $s = \ell$,
and $\Uv_1, \ldots, \Uv_s$ are Milnor fibers of
$x^2+y^2+z^e+w^f$ in this case.
Since \pref{cj:milnor} holds for \pref{eq:BS_k,m-k,2,2}
by \pref{th:hms for U},
one has $\dim \SH^i(\Uv) = h_{e \ell, f \ell}^i$
and $\dim \SH^i(\Uv_j) = h_{e,f}^i$ for $j = 1, \ldots, s$.
Now
\pref{pr:recursion} shows that
\pref{cj:refined_Evans-Lekili} holds
for the singularity
defined by \pref{eq:BS_k,m-k,2,2}.

\subsection{}

For $\bfW$ given by \pref{eq:BS_k,m-k,2,2},
one has
\begin{align}
(h, d_0) =
\begin{cases}
(e f \ell/2, -e-f) & \ell \text{ is even}, \\
(e f \ell, -2e-2f) & \ell \text{ is odd}.
\end{cases}
\end{align}
The isomorphism
$
\HH^* \lb \scoh D \rb
\simeq
\HH^* \lb \scoh U \rb \otimes_{\bfk[s^h]} \bfk[s^h,s^{-h}]
$
and the explicit description of\linebreak
$
\HH^* \lb \scoh U \rb
\cong
\HH^* \lb \emf{\bA^{5}}{K}{\bfW} \rb
$
above imply
$
\HH^i(\scoh D) \cong \HH^i(\scoh U)
$
for all $i \le 0$
and that
the isomorphisms
$
s^h \colon \HH^i(\scoh U) \simto \HH^{i+2d_0}(\scoh U)
$
for $i \le 0$
extend to isomorphisms
$
s^h \colon \HH^i(\scoh D) \simto \HH^{i+2d_0}(\scoh D)
$
for all $i \in \bZ$.
It follows that
the symplectic cohomology and the Rabinowitz Floer cohomology
of the mirror
$\Uv$
satisfies
$
\RFH^i \lb \Uv \rb \cong \SH^i \lb \Uv \rb
$
for all $i \le 0$
and
$
\RFH^i \lb \Uv \rb \cong \RFH^{i+2d_0} \lb \Uv \rb
$
for all $i \in \bZ$.

\subsection{}

As an application of the non-vanishing of the Rabinowitz Floer homology,
one can obtain
the non-displaceability of the contact boundary of $\Uv$
by \cite[Theorem 1.2]{MR2461235}.

\bibliographystyle{amsalpha}
\bibliography{bibs}

\def\cprime{$'$} \def\cprime{$'$}
\providecommand{\bysame}{\leavevmode\hbox to3em{\hrulefill}\thinspace}
\providecommand{\MR}{\relax\ifhmode\unskip\space\fi MR }
\providecommand{\MRhref}[2]{%
  \href{http://www.ams.org/mathscinet-getitem?mr=#1}{#2}
}
\providecommand{\href}[2]{#2}
\begin{thebibliography}{HLW23}

\bibitem[APZ]{2404.17301}
Nikolas Adaloglou, Federica Pasquotto, and Aline Zanardini, \emph{Symplectic cohomology of {$cA_n$} singularities}, arXiv:2404.17301.

\bibitem[AR87]{MR915178}
Maurice Auslander and Idun Reiten, \emph{Almost split sequences for {${\bf Z}$}-graded rings}, Singularities, representation of algebras, and vector bundles ({L}ambrecht, 1985), Lecture Notes in Math., vol. 1273, Springer, Berlin, 1987, pp.~232--243. \MR{915178 (89b:13031)}

\bibitem[BFK14]{MR3270588}
Matthew Ballard, David Favero, and Ludmil Katzarkov, \emph{A category of kernels for equivariant factorizations and its implications for {H}odge theory}, Publ. Math. Inst. Hautes \'Etudes Sci. \textbf{120} (2014), 1--111. \MR{3270588}

\bibitem[BH93]{MR1214325}
Per Berglund and Tristan H{\"u}bsch, \emph{A generalized construction of mirror manifolds}, Nuclear Phys. B \textbf{393} (1993), no.~1-2, 377--391. \MR{MR1214325 (94k:14031)}

\bibitem[BJK]{2209.09442}
Hanwool Bae, Wonbo~Jeong Jeong, and Jongmyeong Kim, \emph{Cluster categories from {F}ukaya categories}, arXiv:2209.09442.

\bibitem[BKM16]{MR3534972}
Karin Baur, Alastair~D. King, and Bethany~R. Marsh, \emph{Dimer models and cluster categories of {G}rassmannians}, Proc. Lond. Math. Soc. (3) \textbf{113} (2016), no.~2, 213--260. \MR{3534972}

\bibitem[BO09]{MR2475400}
Fr\'{e}d\'{e}ric Bourgeois and Alexandru Oancea, \emph{Symplectic homology, autonomous {H}amiltonians, and {M}orse-{B}ott moduli spaces}, Duke Math. J. \textbf{146} (2009), no.~1, 71--174. \MR{2475400}

\bibitem[Buc87]{Buchweitz_MCM}
Ragnar-Olaf Buchweitz, \emph{Maximal {C}ohen-{M}acaulay modules and {T}ate-cohomology over {G}orenstein rings}, Available from {\url{ https://tspace.library.utoronto.ca/handle/1807/16682}}, 1987.

\bibitem[CF09]{MR2461235}
Kai Cieliebak and Urs~Adrian Frauenfelder, \emph{A {F}loer homology for exact contact embeddings}, Pacific J. Math. \textbf{239} (2009), no.~2, 251--316. \MR{2461235}

\bibitem[CO18]{MR3797062}
Kai Cieliebak and Alexandru Oancea, \emph{Symplectic homology and the {E}ilenberg-{S}teenrod axioms}, Algebr. Geom. Topol. \textbf{18} (2018), no.~4, 1953--2130, Appendix written jointly with Peter Albers. \MR{3797062}

\bibitem[CT13]{MR3084707}
Andrei C{\u a}ld{\u a}raru and Junwu Tu, \emph{Curved {$A_\infty$} algebras and {L}andau-{G}inzburg models}, New York J. Math. \textbf{19} (2013), 305--342. \MR{3084707}

\bibitem[Dyc11]{MR2824483}
Tobias Dyckerhoff, \emph{Compact generators in categories of matrix factorizations}, Duke Math. J. \textbf{159} (2011), no.~2, 223--274. \MR{2824483 (2012h:18014)}

\bibitem[Eis80]{MR570778}
David Eisenbud, \emph{Homological algebra on a complete intersection, with an application to group representations}, Trans. Amer. Math. Soc. \textbf{260} (1980), no.~1, 35--64. \MR{MR570778 (82d:13013)}

\bibitem[EL23]{MR4648096}
Jonathan~David Evans and Yanki Lekili, \emph{Symplectic cohomology of compound {D}u {V}al singularities}, Ann. H. Lebesgue \textbf{6} (2023), 727--765. \MR{4648096}

\bibitem[ET11]{MR2834726}
Wolfgang Ebeling and Atsushi Takahashi, \emph{Strange duality of weighted homogeneous polynomials}, Compos. Math. \textbf{147} (2011), no.~5, 1413--1433. \MR{2834726}

\bibitem[FKK23]{MR4591879}
David Favero, Daniel Kaplan, and Tyler~L. Kelly, \emph{Exceptional collections for mirrors of invertible polynomials}, Math. Z. \textbf{304} (2023), no.~2, Paper No. 32, 16. \MR{4591879}

\bibitem[FU11]{MR2803848}
Masahiro Futaki and Kazushi Ueda, \emph{Homological mirror symmetry for {B}rieskorn-{P}ham singularities}, Selecta Math. (N.S.) \textbf{17} (2011), no.~2, 435--452. \MR{2803848 (2012e:14083)}

\bibitem[FU13]{MR3030671}
\bysame, \emph{Homological mirror symmetry for singularities of type {D}}, Math. Z. \textbf{273} (2013), no.~3-4, 633--652. \MR{3030671}

\bibitem[Gam24]{MR4713718}
Benjamin Gammage, \emph{Mirror symmetry for {B}erglund-{H}\"{u}bsch {M}ilnor fibers}, Adv. Math. \textbf{443} (2024), Paper No. 109563, 45. \MR{4713718}

\bibitem[GGV]{2212.14863}
Sheel Ganatra, Yuan Gao, and Sara Venkatesh, \emph{Rabinowitz {F}ukaya categories and the categorical formal punctured neighborhood of infinity}, arXiv:2212.14863.

\bibitem[Hab22]{MR4404801}
Matthew Habermann, \emph{Homological mirror symmetry for invertible polynomials in two variables}, Quantum Topol. \textbf{13} (2022), no.~2, 207--253. \MR{4404801}

\bibitem[HLW23]{MR4532824}
Yang Han, Xin Liu, and Kai Wang, \emph{Exact {H}ochschild extensions and deformed {C}alabi-{Y}au completions}, Comm. Algebra \textbf{51} (2023), no.~2, 757--778. \MR{4532824}

\bibitem[HO23]{MR4535018}
Yuki Hirano and Genki Ouchi, \emph{Derived factorization categories of non-{T}hom-{S}ebastiani-type sums of potentials}, Proc. Lond. Math. Soc. (3) \textbf{126} (2023), no.~1, 1--75. \MR{4535018}

\bibitem[HS20]{MR4325377}
Matthew Habermann and Jack Smith, \emph{Homological {B}erglund-{H}\"{u}bsch mirror symmetry for curve singularities}, J. Symplectic Geom. \textbf{18} (2020), no.~6, 1515--1574. \MR{4325377}

\bibitem[IT13]{MR3063907}
Osamu Iyama and Ryo Takahashi, \emph{Tilting and cluster tilting for quotient singularities}, Math. Ann. \textbf{356} (2013), no.~3, 1065--1105. \MR{3063907}

\bibitem[Iya18]{MR3966760}
Osamu Iyama, \emph{Tilting {C}ohen-{M}acaulay representations}, Proceedings of the {I}nternational {C}ongress of {M}athematicians---{R}io de {J}aneiro 2018. {V}ol. {II}. {I}nvited lectures, World Sci. Publ., Hackensack, NJ, 2018, pp.~125--162. \MR{3966760}

\bibitem[Jef22]{MR4377932}
Maxim Jeffs, \emph{Mirror symmetry and {F}ukaya categories of singular hypersurfaces}, Adv. Math. \textbf{397} (2022), Paper No. 108116, 36. \MR{4377932}

\bibitem[JKS16]{MR3534971}
Bernt~Tore Jensen, Alastair~D. King, and Xiuping Su, \emph{A categorification of {G}rassmannian cluster algebras}, Proc. Lond. Math. Soc. (3) \textbf{113} (2016), no.~2, 185--212. \MR{3534971}

\bibitem[Kel05]{MR2184464}
Bernhard Keller, \emph{On triangulated orbit categories}, Doc. Math. \textbf{10} (2005), 551--581. \MR{2184464 (2007c:18006)}

\bibitem[Kel11]{MR2795754}
\bysame, \emph{Deformed {C}alabi-{Y}au completions}, J. Reine Angew. Math. \textbf{654} (2011), 125--180, With an appendix by Michel Van den Bergh. \MR{2795754}

\bibitem[Kn{\"o}87]{MR877010}
Horst Kn{\"o}rrer, \emph{Cohen-{M}acaulay modules on hypersurface singularities. {I}}, Invent. Math. \textbf{88} (1987), no.~1, 153--164. \MR{877010}

\bibitem[KS]{2309.17062}
Tatsuki Kuwagaki and Vivek Shende, \emph{Adjoints, wrapping, and morphisms at infinity}, arXiv:2309.17062.

\bibitem[KS92]{MR1188500}
Maximilian Kreuzer and Harald Skarke, \emph{On the classification of quasihomogeneous functions}, Comm. Math. Phys. \textbf{150} (1992), no.~1, 137--147. \MR{1188500 (93k:32075)}

\bibitem[LU21]{MR4371540}
Yank{\i} Lekili and Kazushi Ueda, \emph{Homological mirror symmetry for {M}ilnor fibers of simple singularities}, Algebr. Geom. \textbf{8} (2021), no.~5, 562--586. \MR{4371540}

\bibitem[LU22]{MR4442683}
\bysame, \emph{Homological mirror symmetry for {M}ilnor fibers via moduli of {$A_\infty$}-structures}, J. Topol. \textbf{15} (2022), no.~3, 1058--1106. \MR{4442683}

\bibitem[McL16]{MR3489704}
Mark McLean, \emph{Reeb orbits and the minimal discrepancy of an isolated singularity}, Invent. Math. \textbf{204} (2016), no.~2, 505--594. \MR{3489704}

\bibitem[Orl04]{MR2101296}
D.~O. Orlov, \emph{Triangulated categories of singularities and {D}-branes in {L}andau-{G}inzburg models}, Tr. Mat. Inst. Steklova \textbf{246} (2004), no.~Algebr. Geom. Metody, Svyazi i Prilozh., 240--262. \MR{MR2101296}

\bibitem[Orl11]{MR2735755}
Dmitri Orlov, \emph{Formal completions and idempotent completions of triangulated categories of singularities}, Adv. Math. \textbf{226} (2011), no.~1, 206--217. \MR{2735755}

\bibitem[Pet]{2405.03475}
Chris Peters, \emph{On isolated hypersurface singularities: algebra-geometric and symplectic aspects}, arXiv:2405.03475.

\bibitem[Pos]{0609764}
Alexander Postnikov, \emph{Total positivity, {G}rassmannians, and networks}, arXiv:math/0609764.

\bibitem[Rei83]{MR715649}
Miles Reid, \emph{Minimal models of canonical {$3$}-folds}, Algebraic varieties and analytic varieties ({T}okyo, 1981), Adv. Stud. Pure Math., vol.~1, North-Holland, Amsterdam, 1983, pp.~131--180. \MR{715649}

\bibitem[Sco06]{MR2205721}
Joshua~S. Scott, \emph{Grassmannians and cluster algebras}, Proc. London Math. Soc. (3) \textbf{92} (2006), no.~2, 345--380. \MR{2205721 (2007e:14078)}

\bibitem[Seg13]{MR3108698}
Ed~Segal, \emph{The closed state space of affine {L}andau-{G}inzburg {B}-models}, J. Noncommut. Geom. \textbf{7} (2013), no.~3, 857--883. \MR{3108698}

\bibitem[Sei00]{MR1765826}
Paul Seidel, \emph{Graded {L}agrangian submanifolds}, Bull. Soc. Math. France \textbf{128} (2000), no.~1, 103--149. \MR{MR1765826 (2001c:53114)}

\bibitem[Sei08]{MR2426130}
\bysame, \emph{{$A\sb \infty$}-subalgebras and natural transformations}, Homology, Homotopy Appl. \textbf{10} (2008), no.~2, 83--114. \MR{2426130 (2010k:53154)}

\bibitem[Sei09]{MR2483942}
\bysame, \emph{Symplectic homology as {H}ochschild homology}, Algebraic geometry---{S}eattle 2005. {P}art 1, Proc. Sympos. Pure Math., vol.~80, Amer. Math. Soc., Providence, RI, 2009, pp.~415--434. \MR{2483942 (2010c:53129)}

\bibitem[Sei10]{MR2651908}
\bysame, \emph{Suspending {L}efschetz fibrations, with an application to local mirror symmetry}, Comm. Math. Phys. \textbf{297} (2010), no.~2, 515--528. \MR{2651908}

\bibitem[Sei11]{MR2819674}
\bysame, \emph{Homological mirror symmetry for the genus two curve}, J. Algebraic Geom. \textbf{20} (2011), no.~4, 727--769. \MR{2819674 (2012f:53186)}

\bibitem[She11]{MR2863919}
Nick Sheridan, \emph{On the homological mirror symmetry conjecture for pairs of pants}, J. Differential Geom. \textbf{89} (2011), no.~2, 271--367. \MR{2863919 (2012m:53196)}

\bibitem[She15]{MR3294958}
\bysame, \emph{Homological mirror symmetry for {C}alabi-{Y}au hypersurfaces in projective space}, Invent. Math. \textbf{199} (2015), no.~1, 1--186. \MR{3294958}

\bibitem[She16]{MR3578916}
\bysame, \emph{On the {F}ukaya category of a {F}ano hypersurface in projective space}, Publ. Math. Inst. Hautes \'Etudes Sci. \textbf{124} (2016), 165--317. \MR{3578916}

\bibitem[She20]{MR4153652}
\bysame, \emph{Versality of the relative {F}ukaya category}, Geom. Topol. \textbf{24} (2020), no.~2, 747--884. \MR{4153652}

\bibitem[SS21]{MR4243021}
Nick Sheridan and Ivan Smith, \emph{Homological mirror symmetry for generalized {G}reene-{P}lesser mirrors}, Invent. Math. \textbf{224} (2021), no.~2, 627--682. \MR{4243021}

\bibitem[Tak10]{MR2683215}
Atsushi Takahashi, \emph{Weighted projective lines associated to regular systems of weights of dual type}, New developments in algebraic geometry, integrable systems and mirror symmetry ({RIMS}, {K}yoto, 2008), Adv. Stud. Pure Math., vol.~59, Math. Soc. Japan, Tokyo, 2010, pp.~371--388. \MR{2683215}

\bibitem[Ued06]{0604361}
Kazushi Ueda, \emph{Homological mirror symmetry and simple elliptic singularities}, math.AG/0604361, 2006.

\end{thebibliography}

\end{document}